\newtheorem{assumption}{Assumption}
\newcommand{\ie}{{\emph{i.e., }}}
\newcommand{\eg}{{\emph{e.g., }}}
\newcommand{\ubar}[1]{\text{\b{$#1$}}}
\newcommand{\norm}[1]{\left\lVert#1\right\rVert}
\begin{document}
\title{Between Steps: Intermediate Relaxations between big-M and Convex Hull Formulations}
\titlerunning{Relaxations between big-M and convex hull formulations}
%
\author{Jan Kronqvist\thanks{Corresponding author} \and Ruth Misener \and Calvin Tsay}
\authorrunning{J. Kronqvist et al.}
%
\institute{Deparment of Computing, Imperial College London\\
\email{\{j.kronqvist, r.misener, c.tsay\}@imperial.ac.uk}\\}
\maketitle              
\begin{abstract}
This work develops a class of relaxations in between the big-M and convex hull formulations of disjunctions, drawing advantages from both.  The proposed ``$P$-split'' formulations split convex additively separable constraints into $P$ partitions and form the convex hull of the partitioned disjuncts. Parameter $P$ represents the trade-off of model size vs.\ relaxation strength. We examine the novel formulations and prove that, under certain assumptions, the relaxations form a hierarchy starting from a big-M equivalent and converging to the convex hull. 
We computationally compare the proposed formulations to big-M and convex hull formulations on a test set including: K-means clustering, P\_ball problems, and ReLU neural networks.  The computational results show that the intermediate $P$-split formulations can form strong outer approximations of the convex hull with fewer variables and constraints than the extended convex hull formulations, giving significant computational advantages over both the big-M and convex hull.

\keywords{Disjunctive programming \and Relaxation comparison \and Formulations \and Mixed-integer programming \and Convex MINLP }
\end{abstract}

\section{Introduction}
There are well-known trade-offs between the big-M and convex hull relaxations of disjunctions in terms of problem size and relaxation tightness. Convex hull formulations \cite{balas1998disjunctive,ben2001lectures,ceria1999convex,helton2009sufficient,jeroslow1984modelling,stubbs1999branch} provide a \textit{sharp formulation} for a single disjunction, \ie the continuous relaxation provides the best possible lower bound. The convex hull is often represented by so-called extended (a.k.a.\ perspective/multiple-choice) formulations \cite{Balas2018,bonami2015mathematical,conforti2008compact,grossmann2003generalized,gunluk2010perspective,hijazi2012mixed,vielma2015mixed}, which introduce multiple copies of each variable in the disjunction(s). On the other hand, the big-M formulation only introduces one binary variable for each disjunct and results in a smaller problem in terms of both number of variables and constraints; however, in general it provides a weaker relaxation than the convex hull and may require a solver to explore significantly more nodes in a branch-and-bound tree \cite{conforti2014integer,vielma2015mixed}. Even though the big-M formulation is weaker, in some cases it can computationally outperform extended convex hull formulations, as the simpler subproblems can offset the larger number of explored nodes. 
Anderson et al.\ \cite{anderson2020strong} describe a folklore observation in mixed-integer programming (MIP) that extended convex hull formulations tend to perform worse than expected. 
The observation is supported by the numerical results in Anderson et al.\  \cite{anderson2020strong} and in this paper.

This paper presents a framework for generating formulations for disjunctions between the big-M and convex hull with the intention of combining the best of both worlds: a tight, yet computationally efficient, formulation.  
The main idea behind the novel formulations is partitioning the constraints of each disjunct and moving most of the variables out of the disjunction. Forming the convex hull of the resulting disjunctions results in a smaller problem, while retaining some features of the convex hull. We call the new formulation the $P$-split, as the constraints are split into $P$ parts. While many efforts have been devoted to computationally efficient convex hull formulations  \cite{balas1988convex,conforti2008compact,jeroslow1988simplification,sawaya2007computational,trespalacios2015algorithmic,vielma2019small,vielma2010mixed,vielma2011modeling} and techniques for deriving the convex hull of MIP problems  \cite{balas1985disjunctive,lasserre2001explicit,lovasz1991cones,ruiz2012hierarchy,sherali1990hierarchy}, our primary goal is not to generate the convex hull.
Rather, we provide a straightforward framework for generating a family of relaxations that approximate the convex hull for a general class of disjunctions using a smaller problem formulation. 
Our experiments show that the $P$-split formulations can give a significant computational advantage over both the big-M and convex hull formulations. 

This paper is organized as follows: 
the $P$-split formulation is presented in Section 2, together with properties of the $P$-split relaxations and how they compare to the big-M and convex hull relaxations. We also present a non-extended realization of the $P$-split formulation for the special case of a two-term disjunction. Finally, a numerical comparison of the formulations is presented in Section 3 using both instances with linear and nonlinear disjunctions.

\subsection{Background}
We consider optimization problems containing disjunctions of the form
\begin{equation}
\begin{aligned}
\label{eq:main_disjunction}
    &\underset{l \in \mathcal{D}}{\lor} \begin{bmatrix} g_{k}(\boldsymbol{x}) \leq b_k \quad \forall k \in \mathcal{C}_{l}
    \end{bmatrix}\\
    &\boldsymbol{x} \in \mathcal{X} \subset \mathbb{R}^n,
\end{aligned}
\end{equation}
where $\mathcal{D}$ contains the indices of the disjuncts, $\mathcal{C}_{l}$ the indices of the constraints in disjunct $l$, and $\mathcal{X}$ is a convex compact set. This paper assumes the following:
\begin{assumption}
The functions $g_{k}: \mathbb{R}^n \rightarrow \mathbb{R} $ are convex additively separable functions, \ie  $g_{k}(\boldsymbol{x}) = \sum_{i=1}^n h_{ik}(x_i)$ where $h_{ik}: \mathbb{R} \rightarrow \mathbb{R} $ are convex functions, and each disjunct is non-empty on $\mathcal{X}$.   
\end{assumption}
\begin{assumption}
All functions $g_k$ are bounded over $\mathcal{X}$.
\end{assumption}
\begin{assumption}
Each disjunct contains far fewer constraints than the number of variables in the disjunction, \ie $\left| \mathcal{C}_{l}\right| << n$.
\end{assumption}

The first two assumptions are needed for the  $P$-split formulation to be valid and result in a convex MIP.
While the first assumption simplifies our analysis of $P$-split formulations, it could easily be relaxed to partially additively separable functions. 
Furthermore, the computational experiments only consider problems with linear or quadratic constraints, which ensures that the convex hull of the disjunction is representable by a polyhedron or (rotated) second-order cone constraints \cite{ben2001lectures}. 
Assumption 3 characterizes problem structures favorable for the presented formulations. Problems with such a structure include, \eg  clustering \cite{papageorgiou2018pseudo,sauglam2006mixed}, mixed-integer classification \cite{liittschwager1978integer,rubin1997solving}, optimization over trained neural networks \cite{anderson2020strong,botoeva2020efficient,fischetti2018deep,grimstad2019relu,serra2020lossless}, and coverage optimization \cite{huang2005coverage}.

\section{Relaxations between convex hull and big-M}
The formulations in this section apply to disjunctions with multiple constraints per disjunct. However, to simplify the derivation, we only consider disjunctions with one constraint per disjunct, \ie  $|\mathcal{C}_{l}| = 1 \ \forall l \in  \mathcal{D}$. The extension to multiple constraints per disjunct simply applies the splitting procedure to each constraint. 

To derive the new formulations, we partition the variables into $P$ sets and form the corresponding index sets $\mathcal{I}_1, \dots, \mathcal{I}_P$. The constraint for each disjunct is then split into $P$ constraints, by introducing auxiliary variables $\alpha^j \in \mathbb{R}^{P}$
\begin{equation}
\label{eq:main_disjunction_lifted}
\begin{matrix}
\begin{aligned}
    &\underset{l \in \mathcal{D}}{\lor} \begin{bmatrix}  
    g_l(\boldsymbol{x}) \leq b_l\\
    \end{bmatrix}\\ 
    &\boldsymbol{x} \in \mathcal{X} 
    \end{aligned}
\end{matrix}
\quad \quad 
\longrightarrow  \quad \quad  
\begin{matrix}
\begin{aligned}
    &\underset{l \in \mathcal{D}}{\lor} 
    \begin{bmatrix}
    \begin{aligned}
    &\underset{i \in \mathcal{I}_1}{\sum}h_{i,l}(x_i) \leq \alpha^l_1\\[-0.1cm]
    & \quad \quad \quad \vdots \\
    \vspace{0.1cm}
    &\underset{i \in \mathcal{I}_P}{\sum}h_{i,l}(x_i) \leq \alpha^l_P\\
    \vspace{0.1cm}
    &\sum_{s=1}^P \alpha^l_s  \leq b_l\\
    & \ubar{\alpha}^l_s\leq \alpha^l_s \leq \bar{\alpha}^l_s  \quad \forall s \in \{1,\dots, P\}
    \end{aligned}
    \end{bmatrix} 
    \\
    &\boldsymbol{x} \in \mathcal{X}, \boldsymbol{\alpha}^l \in \mathbb{R}^{P}\ \forall\ l \in \mathcal{D}.  
    \end{aligned}
\end{matrix}
\end{equation}

By Assumption 2, function $h_{i,l}$ is bounded on $\mathcal{X}$, and bounds on the auxiliary variables are given by  
\begin{equation}
    \begin{aligned}
        & \ubar{\alpha}^l_s := \min_{\boldsymbol{x} \in \mathcal{X}} \underset{i \in \mathcal{I}_s}{\sum}h_{i,l}(x_i), \quad & \bar{\alpha}^l_s := \max_{\boldsymbol{x} \in \mathcal{X}} \underset{i \in \mathcal{I}_s}{\sum}h_{i,l}(x_i) .
    \end{aligned}
\end{equation}
The $P$-split formulation does not require tight bounds, but weak bounds result in an overall weaker relaxation. 

The splitting creates a lifted formulation by introducing $P \times |\mathcal{D}|$ auxiliary variables. Both formulations in~\eqref{eq:main_disjunction_lifted} have the same feasible set in the $\boldsymbol{x}$ variables. We relax the disjunction by treating the splitted constraints as global constraints 
\begin{equation}
\label{eq:main_disjunction_splitted}
\begin{aligned}
    &\underset{l \in \mathcal{D}}{\lor} \begin{bmatrix}
    \begin{aligned}
    &\sum_{s=1}^P \alpha^l_s  \leq b_l\\
    &\ubar{\alpha}^l_s\leq \alpha^l_s \leq \bar{\alpha}^l_s  \quad \forall s \in \{1,\dots, P\}
    \end{aligned}
    \end{bmatrix} 
    \\
    &\underset{i \in \mathcal{I}_s}{\sum}h_{i,l}(x_i) \leq \alpha^l_s  &&\forall s \in \{1,\dots, P\}, \ \forall \ l \in \mathcal{D} \\
    &\boldsymbol{x} \in \mathcal{X}, \boldsymbol{\alpha}^l \in \mathbb{R}^{P}\ &&\forall \ l \in \mathcal{D}.  
    \end{aligned}
\end{equation}
\begin{definition}
Formulation \eqref{eq:main_disjunction_splitted} is a $P$-split representation of the original disjunction in \eqref{eq:main_disjunction_lifted}.
\end{definition}
Lemma \ref{lemma_feasset} relates the $P$-split representation to the original disjunction. The property is rather simple, but for completeness we have stated it as a lemma.
\begin{lemma}\label{lemma_feasset}
The feasible set of $P$-split representation projected onto the $\boldsymbol{x}$-space is equal to the feasible set of the original disjunctions in \eqref{eq:main_disjunction_lifted}. 
\end{lemma}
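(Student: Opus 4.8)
The plan is to establish the set equality by proving two inclusions, working in the shared variable space of $\boldsymbol{x}$ together with the auxiliary vectors $\boldsymbol{\alpha}^l$, $l \in \mathcal{D}$. Write $X_{\mathrm{orig}}$ for the $\boldsymbol{x}$-feasible set of the original disjunction, \ie those $\boldsymbol{x} \in \mathcal{X}$ for which $g_{l}(\boldsymbol{x}) \leq b_l$ holds for at least one $l \in \mathcal{D}$ (by the remark preceding the lemma this coincides with the $\boldsymbol{x}$-feasible set of the lifted disjunction), and let $S$ denote the feasible set of \eqref{eq:main_disjunction_splitted}. Since by definition $g_l(\boldsymbol{x}) = \sum_{s=1}^P \sum_{i \in \mathcal{I}_s} h_{i,l}(x_i)$, the whole argument reduces to tracking how the splitting constraints couple $g_l$ to the sum $\sum_s \alpha^l_s$. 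The goal is $\mathrm{proj}_{\boldsymbol{x}}(S) = X_{\mathrm{orig}}$.

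For the inclusion $X_{\mathrm{orig}} \subseteq \mathrm{proj}_{\boldsymbol{x}}(S)$, I would take $\boldsymbol{x} \in X_{\mathrm{orig}}$, pick an index $l^\star$ with $g_{l^\star}(\boldsymbol{x}) \leq b_{l^\star}$, and then exhibit a compatible $\boldsymbol{\alpha}$. For the active disjunct set $\alpha^{l^\star}_s := \sum_{i \in \mathcal{I}_s} h_{i,l^\star}(x_i)$; this meets the global splitting constraint with equality, lies inside $[\ubar{\alpha}^{l^\star}_s, \bar{\alpha}^{l^\star}_s]$ by the very definition of those bounds as the min/max of the partial sum over $\mathcal{X}$, and satisfies $\sum_s \alpha^{l^\star}_s = g_{l^\star}(\boldsymbol{x}) \leq b_{l^\star}$, so the $l^\star$ disjunct is activated. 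For every inactive disjunct $l \neq l^\star$ only the global constraint must be honoured, so I set $\alpha^l_s := \bar{\alpha}^l_s$ (equivalently any value $\geq \sum_{i \in \mathcal{I}_s} h_{i,l}(x_i)$), which is finite and valid by Assumption 2. Hence $(\boldsymbol{x}, \{\boldsymbol{\alpha}^l\}_{l \in \mathcal{D}}) \in S$ and $\boldsymbol{x} \in \mathrm{proj}_{\boldsymbol{x}}(S)$.

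The reverse inclusion $\mathrm{proj}_{\boldsymbol{x}}(S) \subseteq X_{\mathrm{orig}}$ is the easy direction and needs no construction. Given $\boldsymbol{x} \in \mathrm{proj}_{\boldsymbol{x}}(S)$ with a witnessing $\boldsymbol{\alpha}$, the disjunction in \eqref{eq:main_disjunction_splitted} supplies some $l^\star$ with $\sum_{s=1}^P \alpha^{l^\star}_s \leq b_{l^\star}$, while the global splitting constraints give $\sum_{i \in \mathcal{I}_s} h_{i,l^\star}(x_i) \leq \alpha^{l^\star}_s$ for every $s$. Summing over $s$ and chaining the inequalities yields $g_{l^\star}(\boldsymbol{x}) = \sum_{s} \sum_{i \in \mathcal{I}_s} h_{i,l^\star}(x_i) \leq \sum_s \alpha^{l^\star}_s \leq b_{l^\star}$, so together with $\boldsymbol{x} \in \mathcal{X}$ we get $\boldsymbol{x} \in X_{\mathrm{orig}}$.

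I do not expect a genuine obstacle here, which is why the authors flag the statement as routine; the only point demanding care is the forward direction, where one must confirm that feasible $\alpha$-values actually exist. This is exactly where Assumption 2 enters: boundedness of each $h_{i,l}$ over $\mathcal{X}$ guarantees the defining min/max for $\ubar{\alpha}^{l^\star}_s$ and $\bar{\alpha}^{l^\star}_s$ are finite, so the constructed $\alpha^{l^\star}_s$ respect their box, and the inactive disjuncts can absorb their splitting constraints with finite values. The crucial structural observation underlying both inclusions is that moving the splitting constraints out of the disjunction only ever relaxes the inactive disjuncts---whose $\boldsymbol{\alpha}^l$ are otherwise free---so the $\boldsymbol{x}$-projection is unchanged even though $S$ is in general a strict relaxation in the lifted space.
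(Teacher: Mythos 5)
Your proof is correct and follows essentially the same route as the paper's: the paper's two-sentence argument is exactly this pair of inclusions (the ``clearly feasible'' direction being your explicit construction of witnesses $\alpha^{l^\star}_s = \sum_{i \in \mathcal{I}_s} h_{i,l^\star}(x_i)$ and $\alpha^l_s = \bar{\alpha}^l_s$ for inactive disjuncts, and the contradiction direction being your chained inequality $g_{l^\star}(\boldsymbol{x}) \leq \sum_s \alpha^{l^\star}_s \leq b_{l^\star}$). You have simply filled in the details the authors deemed routine, including the useful observation of where Assumption 2 and the definition of the bounds $\ubar{\alpha}^l_s, \bar{\alpha}^l_s$ are actually needed.
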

\begin{proof}
An $\bar{\boldsymbol{x}}$ that is feasible for \eqref{eq:main_disjunction_splitted} and violates \eqref{eq:main_disjunction_lifted} gives a contradiction. Similarly, an $\bar{\boldsymbol{x}}$ that is feasible for \eqref{eq:main_disjunction_lifted} is also clearly feasible for \eqref{eq:main_disjunction_splitted}. \qed
\end{proof}

Using the extended formulation \cite{balas1998disjunctive} to represent the convex hull of the disjunction in \eqref{eq:main_disjunction_splitted} results in the \textit{$P$-split formulation}
\begin{equation}
\label{eq:p-split}
\tag{$P$-split}
\begin{aligned}
& \alpha^l_s = \underset{d \in \mathcal{D}}{\sum} \nu^{\alpha^l_s}_d && \forall \ s \in \{1, \dots, P\}, \ \forall\ l \in \mathcal{D} \\
& \sum_{s=1}^P \nu^{\alpha^l_s}_l  \leq b_l\lambda_l &&\forall\ l \in \mathcal{D}\\
& \ubar{\alpha}^l_s\lambda_d \leq \nu^{\alpha^l_s}_d \leq \bar{\alpha}^l_s\lambda_d  &&\forall \ s \in \{1, \dots, P\},\forall\ l,d \in \mathcal{D}\\
 &\underset{i \in \mathcal{I}_s}{\sum}h_{i,l}(x_i) \leq \alpha^l_s  &&\forall\ s \in \{1,\dots, P\}, \ \forall \ l \in \mathcal{D} \\
 & \underset{l \in \mathcal{D}}{\sum}\lambda_l = 1, \quad   \boldsymbol{\lambda} \in \{0, 1\}^{|\mathcal{D}|}\\
&\boldsymbol{x} \in \mathcal{X}, \boldsymbol{\alpha}^l \in \mathbb{R}^{P},  \ \boldsymbol{\nu}^{\alpha^l_s} \in \mathbb{R}^P &&\forall\ s \in \{1,\dots, P\}, \ \forall \ l \in \mathcal{D}\ ,
\end{aligned}
\end{equation}
which forms a convex MIP problem. To clarify our terminology: a 2-split formulation is a formulation \eqref{eq:p-split} where the constraints of the original disjunction are split up into two parts, \ie $P = 2$. We assume that the disjunction is part of a larger optimization problem that may contain multiple disjunctions. Therefore, we need to enforce integrality on the $\lambda$ variables even if we recover the convex hull of the disjunction. Proposition \ref{prop_feasset} shows the correctness of the the \eqref{eq:p-split} formulation of the original disjunction.
\begin{proposition}\label{prop_feasset}
The set of feasible $\boldsymbol{x}$ variables in formulation ~\eqref{eq:p-split} is equal to the feasible set of $\boldsymbol{x}$ variables in disjunction~\eqref{eq:main_disjunction_lifted}.
\end{proposition}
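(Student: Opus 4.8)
The plan is to combine Lemma~\ref{lemma_feasset} with the observation that \eqref{eq:p-split} is exactly the Balas extended formulation of the disjunction in \eqref{eq:main_disjunction_splitted}, but with integrality enforced on $\boldsymbol{\lambda}$. By Lemma~\ref{lemma_feasset}, the $\boldsymbol{x}$-projection of \eqref{eq:main_disjunction_splitted} already coincides with that of \eqref{eq:main_disjunction_lifted}, so it suffices to prove that the feasible $\boldsymbol{x}$-set of \eqref{eq:p-split} equals the feasible $\boldsymbol{x}$-set of \eqref{eq:main_disjunction_splitted}. I would establish this by showing the two inclusions directly, which keeps the retained global (common) constraints $\sum_{i \in \mathcal{I}_s} h_{i,l}(x_i) \le \alpha^l_s$ transparent.

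For the forward inclusion, take any $(\boldsymbol{x}, \boldsymbol{\alpha}, \boldsymbol{\nu}, \boldsymbol{\lambda})$ feasible for \eqref{eq:p-split}. Since $\boldsymbol{\lambda} \in \{0,1\}^{|\mathcal{D}|}$ and $\sum_{l} \lambda_l = 1$, exactly one index $l^\ast$ has $\lambda_{l^\ast} = 1$ while $\lambda_d = 0$ for $d \neq l^\ast$. The key step is that the bound constraints $\ubar{\alpha}^l_s \lambda_d \le \nu^{\alpha^l_s}_d \le \bar{\alpha}^l_s \lambda_d$ collapse to $\nu^{\alpha^l_s}_d = 0$ whenever $\lambda_d = 0$, so the disaggregation identity $\alpha^l_s = \sum_d \nu^{\alpha^l_s}_d$ reduces to $\alpha^l_s = \nu^{\alpha^l_s}_{l^\ast}$. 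Substituting into $\sum_{s} \nu^{\alpha^{l^\ast}_s}_{l^\ast} \le b_{l^\ast}\lambda_{l^\ast} = b_{l^\ast}$ yields $\sum_s \alpha^{l^\ast}_s \le b_{l^\ast}$, while the same bounds give $\ubar{\alpha}^{l^\ast}_s \le \alpha^{l^\ast}_s \le \bar{\alpha}^{l^\ast}_s$. Together with the retained global constraints, this exhibits $(\boldsymbol{x}, \boldsymbol{\alpha})$ as a feasible point of \eqref{eq:main_disjunction_splitted} with $l^\ast$ as the satisfied disjunct, hence $\boldsymbol{x}$ lies in its $\boldsymbol{x}$-projection.

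For the reverse inclusion, take $\boldsymbol{x}$ in the $\boldsymbol{x}$-projection of \eqref{eq:main_disjunction_splitted}; by Lemma~\ref{lemma_feasset} there is an index $l^\ast$ with $g_{l^\ast}(\boldsymbol{x}) \le b_{l^\ast}$ and $\boldsymbol{x} \in \mathcal{X}$. I would construct an explicit lift by setting $\alpha^l_s := \sum_{i \in \mathcal{I}_s} h_{i,l}(x_i)$ for all $l,s$, taking $\lambda_{l^\ast} := 1$ and $\lambda_d := 0$ otherwise, and $\nu^{\alpha^l_s}_{l^\ast} := \alpha^l_s$ with $\nu^{\alpha^l_s}_d := 0$ for $d \neq l^\ast$. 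Here I would use the definitions of $\ubar{\alpha}^l_s$ and $\bar{\alpha}^l_s$ as the min/max of $\sum_{i \in \mathcal{I}_s} h_{i,l}(x_i)$ over $\mathcal{X}$: since $\boldsymbol{x} \in \mathcal{X}$, these globally valid bounds guarantee $\ubar{\alpha}^l_s \le \alpha^l_s \le \bar{\alpha}^l_s$ for \emph{every} $l$, not just the active one, which is precisely what the copy bounds at $d = l^\ast$ require. The aggregation constraint for $l^\ast$ follows from $\sum_s \alpha^{l^\ast}_s = g_{l^\ast}(\boldsymbol{x}) \le b_{l^\ast}$, and the remaining constraints of \eqref{eq:p-split} are verified by direct substitution.

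The constraint-by-constraint verifications are routine; the steps requiring the most care are the bookkeeping of the disaggregated copies $\nu^{\alpha^l_s}_d$. Specifically, one must recognize that the bounds scaled by $\lambda_d$ force the inactive copies to vanish in the forward direction, and that the \emph{globally} valid nature of the auxiliary-variable bounds (rather than disjunct-specific bounds) is what permits the explicit construction in the reverse direction even for the inactive disjuncts.
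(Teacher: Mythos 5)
Your proof is correct and takes essentially the same route as the paper: reduce via Lemma~\ref{lemma_feasset} to comparing \eqref{eq:p-split} with \eqref{eq:main_disjunction_splitted}, and then use the fact that for binary $\boldsymbol{\lambda}$ the extended formulation exactly represents that disjunction. The only difference is that the paper cites this exactness outright, whereas you verify it explicitly (the inactive copies $\nu^{\alpha^l_s}_d$ vanishing when $\lambda_d=0$, and the global validity of the bounds $\ubar{\alpha}^l_s,\bar{\alpha}^l_s$ over $\mathcal{X}$ enabling the lift for inactive disjuncts), which makes the argument self-contained but is not a different approach.
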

\begin{proof}
By Lemma 1, \eqref{eq:main_disjunction_lifted} and \eqref{eq:main_disjunction_splitted} have equivalent $\boldsymbol{x}$ feasible sets. For $\lambda \in \{0, 1\}^{|\mathcal{D}|}$, the extended formulation \eqref{eq:p-split} exactly represents the disjunction \eqref{eq:main_disjunction_splitted}. \qed
\end{proof}
Proposition \ref{prop_feasset} states that the $P$-split formulation is correct for integer feasible solutions, but it does not give any insight on the quality of the continuous relaxation. The following subsections further analyze the properties of the \eqref{eq:p-split} formulation and its relation to the big-M and convex hull formulations. 

\begin{remark}
A \eqref{eq:p-split} formulation introduces $P\cdot \left(|\mathcal{D}|^2 +1 \right)$ continuous  and $|\mathcal{D}|$ binary variables. Unlike the extended convex hull formulation (which introduces $|\mathcal{D}|\cdot n$ continuous and $|\mathcal{D}|$ binary variables), the number of \say{extra} variables is independent of $n$, \ie the number of variables in the original disjunction. 
As we later show, there are applications where $|\mathcal{D}| << n$ for which \eqref{eq:p-split} formulations can be smaller and computationally more tractable than the extended convex hull formulation.
\end{remark}

\subsection{Properties of the $P$-Split formulation}
This section focuses on the strength of the continuous relaxation of the $P$-split formulation, and how it compares to convex hull and big-M formulations. To simplify the analyses, we only consider disjunctions with a single constraint per disjunct. However, the results directly extend to the case of multiple constraints per disjunct by applying the same procedure to each individual constraint.  

We first analyze the 1-split, as summarized in the following theorem.

\begin{theorem}
The 1-split formulation is equivalent to the big-M formulation.
\end{theorem}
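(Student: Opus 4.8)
The plan is to write out both the $1$-split formulation and the big-M formulation explicitly for a single disjunction, and then show that after eliminating the auxiliary variables the $1$-split reduces exactly to big-M. When $P = 1$, the index set partition is trivial ($\mathcal{I}_1 = \{1,\dots,n\}$), so each disjunct carries a single split constraint $g_l(\boldsymbol{x}) = \sum_{i} h_{i,l}(x_i) \leq \alpha^l_1$, and the variable $\alpha^l_1$ is the only auxiliary split variable. My aim is to project out all the $\nu$ and $\alpha$ variables in~\eqref{eq:p-split} and recover the standard big-M constraints $g_l(\boldsymbol{x}) \leq b_l \lambda_l + \bar{M}_l (1 - \lambda_l)$ together with $\sum_l \lambda_l = 1$.

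First I would specialize the constraints of~\eqref{eq:p-split} to $P=1$, dropping the subscript $s$ since it only takes the value $1$. This gives $\alpha^l = \sum_{d \in \mathcal{D}} \nu^{\alpha^l}_d$, the disaggregation bounds $\ubar{\alpha}^l \lambda_d \leq \nu^{\alpha^l}_d \leq \bar{\alpha}^l \lambda_d$ for all $l,d$, the activated right-hand side $\nu^{\alpha^l}_l \leq b_l \lambda_l$, and the coupling $g_l(\boldsymbol{x}) \leq \alpha^l$. Next I would project out the $\nu$ variables. For a fixed $l$, summing the bound constraints over $d \neq l$ gives the tightest lower value for $\sum_{d \neq l}\nu^{\alpha^l}_d$, namely $\sum_{d\neq l}\ubar{\alpha}^l\lambda_d = \ubar{\alpha}^l(1-\lambda_l)$ using $\sum_d \lambda_d = 1$. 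Combining with $\nu^{\alpha^l}_l \leq b_l\lambda_l$, the smallest admissible value of $\alpha^l = \nu^{\alpha^l}_l + \sum_{d\neq l}\nu^{\alpha^l}_d$ is $b_l \lambda_l + \ubar{\alpha}^l(1-\lambda_l)$, and since $g_l(\boldsymbol{x}) \leq \alpha^l$ I can substitute this minimal value to obtain $g_l(\boldsymbol{x}) \leq b_l \lambda_l + \ubar{\alpha}^l(1-\lambda_l)$.

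I would then identify the resulting constraint as the big-M constraint: with the natural big-M constant $\bar{M}_l := \ubar{\alpha}^l = \min_{\boldsymbol{x}\in\mathcal{X}} g_l(\boldsymbol{x})$, the inequality $g_l(\boldsymbol{x}) \leq b_l\lambda_l + \bar{M}_l(1-\lambda_l)$ is precisely the big-M relaxation of the disjunct $g_l(\boldsymbol{x}) \leq b_l$ using the tightest valid big-M value over $\mathcal{X}$. When $\lambda_l = 1$ this enforces the original constraint, and when $\lambda_l = 0$ it is implied by the definition of $\ubar{\alpha}^l$, so it is inactive. Since nothing else in the formulation couples the disjuncts except $\sum_l \lambda_l = 1$ and $\boldsymbol{x}\in\mathcal{X}$, the projected feasible region in $(\boldsymbol{x},\boldsymbol{\lambda})$ coincides with that of big-M. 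I expect the main obstacle to be arguing the projection direction carefully: I must verify that for any $(\boldsymbol{x},\boldsymbol{\lambda})$ feasible for big-M, one can \emph{reconstruct} feasible $\nu$ and $\alpha$ variables (the reverse inclusion), which follows by choosing $\nu^{\alpha^l}_l = b_l\lambda_l$ and $\nu^{\alpha^l}_d = \ubar{\alpha}^l\lambda_d$ for $d\neq l$ and checking all bounds hold; the forward inclusion is the minimization argument above. I would also note that this equivalence relies on using the tightest big-M value derived from the same bounds $\ubar{\alpha}^l$ that define the $1$-split, which is the natural choice and the sense in which the two formulations are identified.
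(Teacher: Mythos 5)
Your overall strategy (specialize \eqref{eq:p-split} to $P=1$, project out the $\nu$ and $\alpha$ variables, and read off the big-M constant) matches the paper's Fourier--Motzkin argument, but the projection step has a direction error that breaks the proof. The coupling constraint $g_l(\boldsymbol{x}) \leq \alpha^l$ is a \emph{lower} bound on the auxiliary variable $\alpha^l$. Projection asks for which $(\boldsymbol{x},\boldsymbol{\lambda})$ there \emph{exists} an admissible $\alpha^l$, and this holds if and only if $g_l(\boldsymbol{x})$ does not exceed the \emph{largest} admissible value of $\alpha^l$; Fourier--Motzkin therefore pairs $g_l(\boldsymbol{x})\le\alpha^l$ with the \emph{upper} bounds $\nu^{\alpha^l}_l \le b_l\lambda_l$ and $\nu^{\alpha^l}_d \le \bar{\alpha}^l\lambda_d$ for $d\neq l$, yielding $g_l(\boldsymbol{x}) \le b_l\lambda_l + \bar{\alpha}^l(1-\lambda_l)$ and hence $M^l = \bar{\alpha}^l - b_l$ with $\bar{\alpha}^l = \max_{\boldsymbol{x}\in\mathcal{X}}\, g_l(\boldsymbol{x})$, exactly as in the paper. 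You instead substituted the \emph{smallest} admissible value of $\alpha^l$ (built from the lower bounds $\ubar{\alpha}^l\lambda_d$), obtaining $g_l(\boldsymbol{x}) \le b_l\lambda_l + \ubar{\alpha}^l(1-\lambda_l)$ with ``big-M'' constant $\ubar{\alpha}^l = \min_{\boldsymbol{x}\in\mathcal{X}}\, g_l(\boldsymbol{x})$. This is logically backwards: from ``$g_l(\boldsymbol{x})\le\alpha^l$ for \emph{some} admissible $\alpha^l$'' one cannot conclude ``$g_l(\boldsymbol{x})\le$ the \emph{minimum} admissible $\alpha^l$''; the latter describes a strictly smaller set than the projection. (Your ``smallest admissible value'' is also internally inconsistent, since it mixes the upper bound $b_l\lambda_l$ on $\nu^{\alpha^l}_l$ with lower bounds on the other $\nu^{\alpha^l}_d$.)

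The consequence is that your final formulation is not the big-M formulation and is not even a valid formulation of the disjunction: when $\lambda_l = 0$ your constraint reads $g_l(\boldsymbol{x}) \le \ubar{\alpha}^l = \min_{\boldsymbol{x}\in\mathcal{X}}\, g_l(\boldsymbol{x})$, which is not ``inactive'' as you claim but maximally restrictive---it forces $\boldsymbol{x}$ to be a minimizer of $g_l$, cutting off points that satisfy the other disjuncts. Deactivating a ``$\le$'' constraint requires the \emph{upper} bound, since $g_l(\boldsymbol{x})\le\bar{\alpha}^l$ holds for all $\boldsymbol{x}\in\mathcal{X}$ by definition of $\bar{\alpha}^l$. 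The same confusion sinks your reverse-inclusion reconstruction: choosing $\nu^{\alpha^l}_d = \ubar{\alpha}^l\lambda_d$ for $d\ne l$ gives, at a big-M-feasible point with $\lambda_l=0$, the value $\alpha^l = \ubar{\alpha}^l$, which need not dominate $g_l(\boldsymbol{x})$. The repair is mechanical: swap $\ubar{\alpha}^l$ for $\bar{\alpha}^l$ throughout, \ie take $\nu^{\alpha^l}_l = \min\{b_l,\bar{\alpha}^l\}\lambda_l$ and $\nu^{\alpha^l}_d = \bar{\alpha}^l\lambda_d$ for $d\ne l$ in the reconstruction, and pair the lower bound $g_l(\boldsymbol{x})\le\alpha^l$ with upper bounds in the elimination; your argument then coincides with the paper's proof.
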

\begin{proof} 
We eliminate the disaggregated variables $\nu^{\alpha^l}_d$ from the 1-split formulation using Fourier-Motzkin elimination. 
Furthermore, we eliminate trivially redundant constraints, \eg  $\ubar{\alpha}^l\lambda_d \leq \bar{\alpha}^l\lambda_d$, resulting in
\begin{equation}
\label{eq:1-split_bigM}
\begin{aligned}
&  \alpha^l  \leq b_l\lambda_l +  \underset{d \in \mathcal{D} \setminus l}{\sum}\bar{\alpha}^l\lambda_d  && \forall l \in \mathcal{D} \\
  &\sum_{i=1}^nh_{i,l}(x_i) \leq \alpha^l  &&\forall \ l \in \mathcal{D} \\
 & \underset{l \in \mathcal{D}}{\sum}\lambda_l = 1, \quad  \boldsymbol{\lambda} \in \{0, 1\}^{|\mathcal{D}|},\boldsymbol{x} \in \mathcal{X}, \boldsymbol{\alpha}^l \in \mathbb{R} \ &&\forall \ l \in \mathcal{D}.
\end{aligned}
\end{equation}
The auxiliary variables $ \alpha^l$ are removed by combining the first and second constraints in \eqref{eq:1-split_bigM}. The smallest valid big-M coefficients are $M^l = \bar{\alpha}^l - b_l $, which enables us to write \eqref{eq:1-split_bigM} as
\begin{equation}
\begin{aligned}
&  \sum_{i=1}^nh_{i,l}(x_i)    \leq b_l  + M^l(1- \lambda_l)  && \forall l \in \mathcal{D}_k \\
 & \underset{l \in \mathcal{D}}{\sum}\lambda_l = 1, \quad  \boldsymbol{\lambda} \in \{0, 1\}^{|\mathcal{D}|},\ \boldsymbol{x} \in \mathcal{X}.
\end{aligned}
\end{equation}
\qed
\end{proof}
Since  the 1-split formulation introduces $|\mathcal{D}|^2 + 1$ auxiliary variables, but has the same continuous relaxation as the big-M formulation, there are no clear advantages of the 1-split formulation vs the big-M formulation. 

We now examine the other extreme, where constraints are fully disaggregated, \ie the $n$-split. Its relation to the convex hull is given in the following theorem.

\begin{theorem}
If all $h_{i,l}$ are affine functions, then the $n$-split formulation (where constraints are split for each variable) provides the convex hull of the disjunction.
\end{theorem}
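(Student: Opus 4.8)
The plan is to exploit the fact that, by construction, \eqref{eq:p-split} is already the exact extended (Balas \cite{balas1998disjunctive}) convex-hull representation of the $\boldsymbol\alpha$-space disjunction \eqref{eq:main_disjunction_splitted}. Writing each affine term as $h_{s,l}(x_s)=a_{s,l}x_s+c_{s,l}$ (so $g_l(\boldsymbol x)=\boldsymbol a_l^\top\boldsymbol x+C_l$ with $C_l=\sum_s c_{s,l}$), the continuous relaxation of the $n$-split ($P=n$, $\mathcal{I}_s=\{s\}$) projects onto $\{(\boldsymbol x,\boldsymbol\alpha):\boldsymbol\alpha\in\mathrm{conv}(\bigcup_{l}D_l),\ h_{s,l}(x_s)\le\alpha^l_s\ \forall s,l,\ \boldsymbol x\in\mathcal X\}$, where $D_l$ denotes the $l$-th disjunct of \eqref{eq:main_disjunction_splitted} inside the $\boldsymbol\alpha$-box. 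Setting $S_l:=\{\boldsymbol x\in\mathcal X:g_l(\boldsymbol x)\le b_l\}$ and letting $H$ be the $\boldsymbol x$-projection of the relaxation, the goal is to show $H=CH$, where $CH:=\mathrm{conv}(\bigcup_{l\in\mathcal D}S_l)$. I would prove the two inclusions separately.

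For the easy inclusion $CH\subseteq H$ I would argue constructively. Given $\bar{\boldsymbol x}=\sum_{l}\lambda_l\boldsymbol x^{(l)}$ with $\boldsymbol x^{(l)}\in S_l$, $\lambda_l\ge0$, $\sum_l\lambda_l=1$, set $\nu^{\alpha^l_s}_d:=a_{s,l}\lambda_d x^{(d)}_s+c_{s,l}\lambda_d=\lambda_d\,h_{s,l}(x^{(d)}_s)$ and $\alpha^l_s:=h_{s,l}(\bar x_s)$. Then $\sum_d\nu^{\alpha^l_s}_d=h_{s,l}(\bar x_s)=\alpha^l_s$; the bounds $\ubar{\alpha}^l_s\lambda_d\le\nu^{\alpha^l_s}_d\le\bar{\alpha}^l_s\lambda_d$ hold because $\boldsymbol x^{(d)}\in\mathcal X$ forces $h_{s,l}(x^{(d)}_s)\in[\ubar{\alpha}^l_s,\bar{\alpha}^l_s]$ by definition of the bounds; and $\sum_s\nu^{\alpha^l_s}_l=\lambda_l\,g_l(\boldsymbol x^{(l)})\le b_l\lambda_l$. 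Hence the point is relaxation-feasible and $\bar{\boldsymbol x}\in H$. (Alternatively, this inclusion follows abstractly: the relaxation is convex and contains every integer-feasible point, whose $\boldsymbol x$-projection is $\bigcup_l S_l$ by Proposition~\ref{prop_feasset}, so its projection contains $\mathrm{conv}(\bigcup_l S_l)$.)

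The hard inclusion $H\subseteq CH$ is the crux. Take a relaxation-feasible $(\bar{\boldsymbol x},\boldsymbol\alpha,\boldsymbol\nu,\lambda)$. From the \emph{diagonal} copies I would recover $\boldsymbol z^l$ by inverting the affine $h_{s,l}$ via $\nu^{\alpha^l_s}_l=a_{s,l}z^l_s+c_{s,l}\lambda_l$ (coordinates with $a_{s,l}=0$ are constant and handled trivially); the bounds then give $\boldsymbol z^l\in\lambda_l S_l$ and the budget constraint gives $\boldsymbol a_l^\top\boldsymbol z^l+C_l\lambda_l\le b_l\lambda_l$. The remaining task is to show that one may take $\sum_l\boldsymbol z^l=\bar{\boldsymbol x}$, i.e.\ that $\bar{\boldsymbol x}\in\sum_{l}\lambda_l S_l$ for the multipliers $\lambda$ produced by the relaxation; since $\bigcup_{\lambda}\sum_l\lambda_l S_l=CH$, this yields $\bar{\boldsymbol x}\in CH$. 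This is a coordinate-wise transportation-type feasibility problem in the copies, and its feasibility is forced by the \emph{simultaneous} global constraints $h_{s,l}(\bar x_s)\le\sum_d\nu^{\alpha^l_s}_d$ holding for \emph{all} $l$ against a \emph{common} $\lambda$; I would certify it by LP duality (Farkas), showing that any separating certificate would contradict $\boldsymbol\alpha\in\mathrm{conv}(\bigcup_l D_l)$.

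The main obstacle is precisely this decomposition step, and it is where the three ingredients are indispensable. Affineness lets me invert $h_{s,l}$ and makes the perspective operation linear, so the copies assemble additively; the full split $P=n$ guarantees that coordinate $(l,s)$ of the lifted map depends on the single variable $x_s$, which is what allows a purely coordinate-wise reconstruction of $\bar{\boldsymbol x}$ from the copies. The subtle point is that \eqref{eq:p-split} never disaggregates $\boldsymbol x$ (a single global $\boldsymbol x\in\mathcal X$ is shared), so the coupling $\sum_l\lambda_l\boldsymbol x^{(l)}=\bar{\boldsymbol x}$ must be extracted indirectly through the $\nu$ variables and the shared $\lambda$. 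I expect this to go through cleanly when $\mathcal X$ is box-structured (so that the per-coordinate bounds $\ubar{\alpha}^l_s,\bar{\alpha}^l_s$ exactly describe the relevant projections of $\lambda_l\mathcal X$); I would flag that for a general convex $\mathcal X$ the coordinate-wise bounds can lose information, which is exactly the point at which $\mathcal X$ enters the equality $H=CH$.
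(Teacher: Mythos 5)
Your easy inclusion $CH\subseteq H$ is correct and complete, and the diagonal-recovery step in the hard direction (each $\boldsymbol z^l\in\lambda_l S_l$, valid when $\mathcal X$ is a box with tight bounds) is also fine. But the proof stops exactly where the theorem starts: the claim that $\bar{\boldsymbol x}\in\sum_l\lambda_l S_l$ is only \emph{announced} (``I would certify it by LP duality (Farkas)'') and never carried out, and that claim is the entire content of the statement. Moreover, this is not a gap you could close by turning the Farkas crank, because the claim is false at this level of generality --- even in the box case you single out as clean. Take $\mathcal X=[0,1]^2$ and the two-term affine disjunction with disjuncts $x_1+x_2\le 1$ and $2x_1+x_2\le 1$, so the tightest bounds are $\bar{\alpha}^1_1=\bar{\alpha}^1_2=\bar{\alpha}^2_2=1$, $\bar{\alpha}^2_1=2$, all lower bounds $0$. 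Since $S_2\subseteq S_1$, we have $\mathrm{conv}(S_1\cup S_2)=S_1=\{\boldsymbol x\in[0,1]^2: x_1+x_2\le 1\}$. Yet $\bar{\boldsymbol x}=(3/4,1/2)$, which violates $x_1+x_2\le1$, is feasible for the continuous relaxation of the $n$-split \eqref{eq:p-split}: take $\lambda=(1/2,1/2)$, $(\nu^{\alpha^1_1}_1,\nu^{\alpha^1_1}_2)=(1/4,1/2)$, $(\nu^{\alpha^1_2}_1,\nu^{\alpha^1_2}_2)=(1/4,1/4)$, $(\nu^{\alpha^2_1}_1,\nu^{\alpha^2_1}_2)=(1,1/2)$, $(\nu^{\alpha^2_2}_1,\nu^{\alpha^2_2}_2)=(1/2,0)$, giving $\boldsymbol\alpha^1=(3/4,1/2)$, $\boldsymbol\alpha^2=(3/2,1/2)$; all variable bounds, both budget constraints $\sum_s\nu^{\alpha^l_s}_l\le b_l\lambda_l$, and all linking constraints $h_{s,l}(\bar x_s)\le\alpha^l_s$ are satisfied. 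So relaxation feasibility of $(\bar{\boldsymbol x},\lambda)$ does \emph{not} force $\bar{\boldsymbol x}\in\sum_l\lambda_l S_l$: the separating certificate you hope to rule out genuinely exists, precisely because the off-diagonal copies $\nu^{\alpha^l_s}_d$, $d\ne l$, are only box-constrained and can absorb the slack.

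For comparison, the paper's own proof takes a different route: it rewrites the $n$-split with an equality link $\tilde{\boldsymbol\alpha}=\boldsymbol\Gamma\boldsymbol x$, notes $\boldsymbol\Gamma$ is injective with linear left inverse $\boldsymbol\Psi$, and pulls a convex combination $\boldsymbol z=\sum_l\lambda_l\tilde{\boldsymbol\alpha}^l$ back to $\boldsymbol\Psi\boldsymbol z=\sum_l\lambda_l\boldsymbol\Psi\tilde{\boldsymbol\alpha}^l$, asserting ``by construction'' that $(\mathbf a^l)^T\boldsymbol\Psi\tilde{\boldsymbol\alpha}^l\le b_l$. That assertion is valid only when each disaggregated copy $\tilde{\boldsymbol\alpha}^l$ lies in the image of $\boldsymbol\Gamma$, which the extended formulation over the $\alpha$-variables alone (with $\boldsymbol x$ shared, never copied) does not enforce --- this is exactly the coupling issue you isolated, and the example above shows it is not vacuous. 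So your write-up locates the crux more honestly than the paper's argument does, but as a proof it is incomplete: the one step you defer to duality is the theorem itself, and it cannot be established without either disaggregating $\boldsymbol x$ (which changes the formulation) or adding hypotheses on the disjunction that exclude configurations like the one above.
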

\begin{proof}
In the linear case, the original disjunction is given by
\begin{equation}
\label{eq:lin_disjunct}
    \begin{matrix}
\begin{aligned}
    &\underset{l \in \mathcal{D}}{\lor} \begin{bmatrix}  
    (\mathbf{a}^l)^T \boldsymbol{x} \leq {b_l}\\
    \end{bmatrix}\\ 
    &\boldsymbol{x} \in \mathcal{X},
    \end{aligned}
\end{matrix}
\end{equation}
and the $n$-split formulation can be written compactly as 
\begin{equation}
\label{eq:p-split-rep-lin}
    \begin{matrix}
\begin{aligned}
    &\underset{l \in \mathcal{D}}{\lor} \begin{bmatrix}  
    \mathbf{B}^l\tilde{\boldsymbol{\alpha}} \leq \tilde{\boldsymbol{b}_l}\\
    \end{bmatrix}\\ 
    &\tilde{\boldsymbol{\alpha}} = \mathbf{\Gamma} \boldsymbol{x}, \quad \boldsymbol{x} \in \mathcal{X}, \tilde{\boldsymbol{\alpha}} \in \mathbb{R}^{n\times|\mathcal{D}|}.
    \end{aligned}
\end{matrix}
\end{equation}
The $n$-split formulation is given by the convex hull of \eqref{eq:p-split-rep-lin} through the extended formulation. Here, $\mathbf{\Gamma}$ defines a bijective mapping between the   $\boldsymbol{x}$ and $\tilde{\boldsymbol{\alpha}}$ variable spaces (only true for an $n$-split). A reverse mapping is given by $\boldsymbol{x}= \mathbf{\Psi}\tilde{\boldsymbol{\alpha}}$. The linear transformations preserve an exact representation of the feasible sets, \ie
\begin{equation}
\begin{aligned}
  &  \mathbf{B}^l\tilde{\boldsymbol{\alpha}} \leq \tilde{\boldsymbol{b}_l} \iff (\mathbf{a}^l)^T \mathbf{\Psi}\tilde{\boldsymbol{\alpha}} \leq b_,  \quad \quad (\mathbf{a}^l)^T \boldsymbol{x} \leq b_l  \iff \mathbf{B}^l\mathbf{\Gamma} \boldsymbol{x} \leq \tilde{\boldsymbol{b}_l}.    
\end{aligned}
\end{equation}
For any point $\boldsymbol{z}$ in the the convex hull of  \eqref{eq:p-split-rep-lin} $\exists\ \tilde{\boldsymbol{\alpha}}^1, \tilde{\boldsymbol{\alpha}}^2, \dots \tilde{\boldsymbol{\alpha}}^{\mathcal{|D}|}$ and $\boldsymbol{\lambda} \in \mathbb{R}_+^{|\mathcal{D}|}$
\begin{align}
\label{eq:convex-comb}
    &\boldsymbol{z} = \sum_{l=1}^{|\mathcal{D}|}\lambda_l\tilde{\boldsymbol{\alpha}}^l\\
    &  \sum_{l=1}^{|\mathcal{D}|}\lambda_l = 1,\ \  \mathbf{B}^l\tilde{\boldsymbol{\alpha}}^l \leq \tilde{\boldsymbol{b}_l} \quad  \forall\ l \in \mathcal{D} \nonumber.
\end{align}
Applying the reverse mapping to \eqref{eq:convex-comb} gives
\begin{equation}
     \mathbf{\Psi}\boldsymbol{z} = \sum_{l=1}^{|\mathcal{D}|}\lambda_l\mathbf{\Psi}\tilde{\boldsymbol{\alpha}}^l.
\end{equation}
By construction, $ \left(\mathbf{a}^l\right)^T\mathbf{\Psi}\tilde{\alpha}^l \leq b_l \quad  \forall l \in \mathcal{D}$.  The point $\mathbf{\Psi}\boldsymbol{z}$ is given by a convex combination of points that all satisfy the constraints of one of the disjuncts in \eqref{eq:lin_disjunct} and, therefore, belongs to the convex hull of \eqref{eq:lin_disjunct}. The same technique easily shows that any point in the convex hull of disjunction~\eqref{eq:lin_disjunct} also belongs to the convex hull of disjunction~\eqref{eq:p-split-rep-lin}.\qed
\end{proof}
Theorem 2 does not hold with nonlinear functions, since the mapping may not be bijective or a homomorphism. In general, the $n$-split formulation will not obtain the convex hull of nonlinear disjunctions, as Section 2.2 shows by example, but it can provide a strong outer approximation. 

\subsubsection{Two-term disjunctions}
We further analyze the special case of a two-term disjunction for which we also present a non-lifted $P$-split formulation in the following theorem. 
\begin{theorem}
For a  two-term disjunction, the $P$-split formulation has the following non-extended realization 
\begin{equation}
\begin{aligned}
\label{eq:P-2-split-nonlifted}
& \underset{j \in \mathcal{S}_p}{\sum}\left( \underset{i \in \mathcal{I}_j}{\sum}h_{i,1}(x_i) \right)\leq \left(b_1 - \underset{s \in \mathcal{S}\setminus \mathcal{S}_p}{\sum} \ubar{\alpha}^1_s\right)\lambda_1 + \underset{s \in \mathcal{S}_p}{\sum}\bar{\alpha}^1_s\lambda_2 && \forall \mathcal{S}_p \subset \mathcal{S}\\
& \underset{j \in \mathcal{S}_p}{\sum}\left( \underset{i \in \mathcal{I}_j}{\sum}h_{i,2}(x_i) \right)\leq \left(b_2 - \underset{s \in \mathcal{S}\setminus \mathcal{S}_p}{\sum} \ubar{\alpha}^2_s\right)\lambda_2 + \underset{s \in \mathcal{S}_p}{\sum}\bar{\alpha}^2_s\lambda_1 && \forall \mathcal{S}_p \subset \mathcal{S}\\
 & \lambda_1 + \lambda_2  = 1, \ \ \boldsymbol{\lambda} \in \{0, 1\}^2, \ \ \boldsymbol{x} \in \mathcal{X},
\end{aligned}
\end{equation}
where $\mathcal{S} = \{1, 2, \dots P\}$.
\end{theorem}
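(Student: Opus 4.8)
The plan is to obtain \eqref{eq:P-2-split-nonlifted} by projecting the extended \eqref{eq:p-split} formulation, specialized to $|\mathcal{D}| = 2$, onto the $(\boldsymbol{x}, \lambda_1, \lambda_2)$-space via Fourier--Motzkin elimination, exactly as in the proof of Theorem~1 but now retaining all $P$ split constraints instead of collapsing them into a single big-M. First I would substitute the disaggregation identity $\alpha^l_s = \nu^{\alpha^l_s}_1 + \nu^{\alpha^l_s}_2$ into the constraints $\sum_{i \in \mathcal{I}_s} h_{i,l}(x_i) \leq \alpha^l_s$, thereby eliminating every $\alpha^l_s$. The key structural observation is that, once $\boldsymbol{x}, \lambda_1, \lambda_2$ are fixed, the disaggregated variables $\nu^{\alpha^1_s}_d$ and $\nu^{\alpha^2_s}_d$ of the two disjuncts are completely decoupled: no constraint of \eqref{eq:p-split} links a variable carrying superscript $1$ to one carrying superscript $2$. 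Hence the elimination can be carried out for each disjunct independently, and by the symmetry $l \leftrightarrow d$ it suffices to treat $l = 1$ and then exchange the roles of $\lambda_1$ and $\lambda_2$.

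For disjunct $l = 1$, write $u_s := \nu^{\alpha^1_s}_1$ and $w_s := \nu^{\alpha^1_s}_2$. The relevant constraints are $\sum_{i \in \mathcal{I}_s} h_{i,1}(x_i) \leq u_s + w_s$, the coupling bound $\sum_{s=1}^P u_s \leq b_1 \lambda_1$, and the box constraints $\ubar{\alpha}^1_s \lambda_1 \leq u_s \leq \bar{\alpha}^1_s \lambda_1$ and $\ubar{\alpha}^1_s \lambda_2 \leq w_s \leq \bar{\alpha}^1_s \lambda_2$. I would first eliminate each $w_s$: the only nontrivial combination pairs the lower bound $w_s \geq \sum_{i \in \mathcal{I}_s} h_{i,1}(x_i) - u_s$ with the upper bound $w_s \leq \bar{\alpha}^1_s \lambda_2$, giving $u_s \geq \sum_{i \in \mathcal{I}_s} h_{i,1}(x_i) - \bar{\alpha}^1_s \lambda_2$. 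Combined with the $u_s$ box this yields the sharpened lower bound $u_s \geq \max\{\ubar{\alpha}^1_s \lambda_1,\ \sum_{i \in \mathcal{I}_s} h_{i,1}(x_i) - \bar{\alpha}^1_s \lambda_2\}$ together with $u_s \leq \bar{\alpha}^1_s \lambda_1$.

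It then remains to eliminate $u_1, \dots, u_P$ subject to these boxes and the single coupling inequality $\sum_s u_s \leq b_1 \lambda_1$. Feasible $u_s$ exist precisely when each box is nonempty and the sum of the lower bounds does not exceed $b_1 \lambda_1$, i.e. $\sum_{s=1}^P \max\{\ubar{\alpha}^1_s \lambda_1,\ \sum_{i \in \mathcal{I}_s} h_{i,1}(x_i) - \bar{\alpha}^1_s \lambda_2\} \leq b_1 \lambda_1$. This is the crux of the argument, and I expect it to be the main obstacle to present cleanly: a sum of pointwise maxima is bounded by a constant iff every selection is, and since $\sum_s \max\{A_s, B_s\} = \max_{\mathcal{S}_p \subseteq \mathcal{S}}\big(\sum_{s \in \mathcal{S}_p} B_s + \sum_{s \in \mathcal{S}\setminus\mathcal{S}_p} A_s\big)$, the single nonlinear inequality is equivalent to the family of linear inequalities indexed by $\mathcal{S}_p$. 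Taking $B_s = \sum_{i \in \mathcal{I}_s} h_{i,1}(x_i) - \bar{\alpha}^1_s \lambda_2$ for $s \in \mathcal{S}_p$ and $A_s = \ubar{\alpha}^1_s \lambda_1$ otherwise, then rearranging, reproduces the first constraint block of \eqref{eq:P-2-split-nonlifted}.

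Finally I would discharge the box-nonemptiness conditions as redundant: $\ubar{\alpha}^1_s \lambda_1 \leq \bar{\alpha}^1_s \lambda_1$ is immediate, while $\sum_{i \in \mathcal{I}_s} h_{i,1}(x_i) - \bar{\alpha}^1_s \lambda_2 \leq \bar{\alpha}^1_s \lambda_1$ reduces, using $\lambda_1 + \lambda_2 = 1$, to $\sum_{i \in \mathcal{I}_s} h_{i,1}(x_i) \leq \bar{\alpha}^1_s$, which holds for all $\boldsymbol{x} \in \mathcal{X}$ by the definition of $\bar{\alpha}^1_s$. Repeating the whole elimination for $l = 2$ with $\lambda_1$ and $\lambda_2$ interchanged produces the second constraint block, completing the projection and hence the proof. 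I note that the two-term restriction is essential to the clean form: it is what reduces the \say{other-disjunct} contribution $\sum_{d \neq l}\bar{\alpha}^l_s \lambda_d$ appearing in the bound on $u_s$ to the single term $\bar{\alpha}^l_s \lambda_{3-l}$.
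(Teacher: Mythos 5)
Your proof is correct and follows essentially the same route as the paper's: both project the extended \eqref{eq:p-split} formulation, specialized to $|\mathcal{D}|=2$, onto the $(\boldsymbol{x},\boldsymbol{\lambda})$-space via Fourier--Motzkin elimination and arrive at the same family of constraints indexed by $\mathcal{S}_p$. The difference is only in bookkeeping: the paper eliminates $\nu^{\alpha^l_s}_1$ through the disaggregation equality and tracks the doubling of constraints one variable at a time, whereas you eliminate $\alpha^l_s$ first, then the $w_s$, and justify the final elimination of the $u_s$ in one step via the interval-feasibility condition and the sum-of-maxima identity $\sum_{s}\max\{A_s,B_s\}=\max_{\mathcal{S}_p\subseteq\mathcal{S}}\bigl(\sum_{s\in\mathcal{S}_p}B_s+\sum_{s\in\mathcal{S}\setminus\mathcal{S}_p}A_s\bigr)$, which also lets you explicitly discharge the box-nonemptiness constraints that the paper dismisses as ``trivially redundant'' --- a cleaner rendering of the same argument.
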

\begin{proof}
The equality constraints for the disaggregated variables ($\alpha_s^l = \nu_1^{\alpha_s^l} + \nu_2^{\alpha_s^l}$) enable us to easily eliminate the variables $\nu^{\alpha^l_s}_1$ from \eqref{eq:p-split}, resulting in
\begin{align}
& \sum_{s=1}^P\left( \alpha^1_s - \nu^{\alpha^1_s}_2\right)  \leq b_1\lambda_1 \label{eq:p-2-s-eq1}\\
& \sum_{s=1}^P \nu^{\alpha^2_s}_2  \leq b_2\lambda_2 \label{eq:p-2-s-eq2}\\
& \ubar{\alpha}^l_s\lambda_1 \leq \alpha^l_s - \nu^{\alpha^l_s}_2 \leq \bar{\alpha}^l_s\lambda_1  &&\forall s  \in \{1, 2 , \dots, P\},\forall\ l \in  \{1, 2\}\label{eq:p-2-s-eq3}\\
& \ubar{\alpha}^l_s\lambda_2 \leq \nu^{\alpha^l_s}_2 \leq \bar{\alpha}^l_s\lambda_2  &&\forall s  \in \{1, 2 , \dots, P\},\forall\ l \in  \{1, 2\}\label{eq:p-2-s-eq4}\\
 &\underset{i \in \mathcal{I}_s}{\sum}h_{i,l}(x_i) \leq \alpha^l_s  &&\forall\ s \in \{1, 2 , \dots, P\}, \ \forall \ l \in  \{1, 2\} \label{eq:p-2-s-eq5}\\
 & \lambda_1 + \lambda_2  = 1, \quad  \boldsymbol{\lambda} \in \{0, 1\}^2 \label{eq:p-2-s-eq6}\\
&\boldsymbol{x} \in \mathcal{X}, \boldsymbol{\alpha}^l \in \mathbb{R}^{P}, \boldsymbol{\nu}^{\alpha^l_s} \in \mathbb{R}^P\ &&\forall \ l \in \{1, 2\}, \forall\ s \in \{1, 2 , \dots, P\}.    
\end{align}
Next, we use Fourier-Motzkin elimination to project out the $\nu^{\alpha^1_s}_2$ variables. Combining the constraints in \eqref{eq:p-2-s-eq3} and \eqref{eq:p-2-s-eq4} only results in trivially redundant constraints, \eg  $\alpha^l_s \leq \bar{\alpha}^l_s(\lambda_1 + \lambda_2)$. Eliminating the first variable $\nu^{\alpha^1_1}_2$ creates two new constraints by combining \eqref{eq:p-2-s-eq1} with \eqref{eq:p-2-s-eq3}--\eqref{eq:p-2-s-eq4}. The first constraint is obtained by removing $\nu^{\alpha^1_1}_2$ and $\alpha^1_1$ from \eqref{eq:p-2-s-eq1} and adding $\ubar{\alpha}^1_1\lambda_2$ to the left-hand side. The second constraint is obtained by removing $\nu^{\alpha^1_1}_2$ from \eqref{eq:p-2-s-eq1} and subtracting $\bar{\alpha}^1_1\lambda_2$ from the left-hand side. Eliminating the next variable is done by repeating the procedure of combining the two new constraints with the corresponding inequalities in \eqref{eq:p-2-s-eq3}--\eqref{eq:p-2-s-eq4}. Each elimination step doubles the number of constraints originating from inequality \eqref{eq:p-2-s-eq1}. Eliminating all the variables $\nu^{\alpha^1_s}_2$ and $\alpha^1_s$ results in the first set of constraints 
\begin{equation}
   \underset{s \in \mathcal{S}_p}\sum \alpha^1_s \leq \left(b_1 - \underset{s \in \mathcal{S}\setminus \mathcal{S}_p}{\sum} \ubar{\alpha}^1_s\right)\lambda_1 + \underset{s \in \mathcal{S}_p}{\sum}\bar{\alpha}^1_s\lambda_2 \quad \forall \mathcal{S}_p \subset \mathcal{S}.
\end{equation}
The variables $\nu^{\alpha^2_s}_2$ and $\alpha^2_s$ are eliminated by same steps, resulting in the second set of constraints in \eqref{eq:P-2-split-nonlifted}. \qed
\end{proof}

To further analyze the tightness of different $P$-split relaxations we require that the bounds on the auxiliary variables be \textit{independent}, as defined below:
\begin{definition}
We say that the upper and lower bounds for the constraint\\ $\sum_{i=1}^n h_{i}(x_i) \leq 0$ are independent on $\mathcal{X}$ if 
\begin{equation}
\begin{aligned}
  &\min_{\boldsymbol{x} \in \mathcal{X}} \left( h_{i}(x_i) + h_{j}(x_j) \right) =  \min_{\boldsymbol{x} \in \mathcal{X}} \ h_{i}(x_i) +\min_{\boldsymbol{x} \in \mathcal{X}} \ h_{j}(x_j) \\
  &\max_{\boldsymbol{x} \in \mathcal{X}} \left( h_{i}(x_i) + h_{j}(x_j) \right) =  \max_{\boldsymbol{x} \in \mathcal{X}} \ h_{i}(x_i) +\max_{\boldsymbol{x} \in \mathcal{X}} \ h_{j}(x_j),
\end{aligned}
\end{equation}
hold for all $i,j \in \{1, 2, \dots n\}$.

\end{definition}
Independent bounds are not restricted to linear constraints, but the most general case of independent bounds are linear disjunctions with $\mathcal{X}$ defined as a box. Independent bounds enable us to establish a strict relation on the tightness of different $P$-split formulations, which is presented in the next corollary.   
\begin{corollary}
For a two-term disjunction with independent bounds, a $(P+1)$-split formulation, obtained by splitting one variable group in the $P$-split, is always as tight or tighter than the corresponding P-split formulation.  
\end{corollary}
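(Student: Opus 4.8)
The plan is to work entirely in the $(\boldsymbol{x}, \boldsymbol{\lambda})$-space using the non-extended realization \eqref{eq:P-2-split-nonlifted} of Theorem 3. Since that realization is obtained by Fourier--Motzkin elimination, it is exactly the projection of the extended two-term $P$-split onto $(\boldsymbol{x}, \boldsymbol{\lambda})$; comparing relaxation tightness therefore reduces to comparing the two projected feasible regions directly. I would prove containment of the $(P+1)$-split region in the $P$-split region by showing that every defining inequality of the $P$-split reappears \emph{verbatim} among the defining inequalities of the $(P+1)$-split. Because the $(P+1)$-split is indexed by subsets of a ground set with one extra element, it carries strictly more inequalities; so once every $P$-split inequality is matched, the extra inequalities can only shrink the region, and ``as tight or tighter'' follows at once.

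First I would fix notation. Without loss of generality the refined block is some group $t \in \mathcal{S}$, split into two disjoint groups $t'$ and $t''$ with $\mathcal{I}_{t'} \cup \mathcal{I}_{t''} = \mathcal{I}_t$, so the $(P+1)$-split has ground set $\mathcal{S}^+ = (\mathcal{S} \setminus \{t\}) \cup \{t', t''\}$ while all other blocks are unchanged. The key use of the independence hypothesis enters here: applying the pairwise identities in the definition of independent bounds inductively across a block shows the bounds are additive over groups, giving $\ubar{\alpha}^l_t = \ubar{\alpha}^l_{t'} + \ubar{\alpha}^l_{t''}$ and $\bar{\alpha}^l_t = \bar{\alpha}^l_{t'} + \bar{\alpha}^l_{t''}$ for $l \in \{1,2\}$, with every other group bound preserved.

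Next I would carry out the matching, treating disjunct $1$ (disjunct $2$ is symmetric). For each $\mathcal{S}_p \subseteq \mathcal{S}$ indexing a constraint of \eqref{eq:P-2-split-nonlifted}, I choose $\mathcal{S}_p^+ \subseteq \mathcal{S}^+$ to be $\mathcal{S}_p$ itself when $t \notin \mathcal{S}_p$, and $(\mathcal{S}_p \setminus \{t\}) \cup \{t', t''\}$ when $t \in \mathcal{S}_p$. In either case the left-hand sides coincide because $\mathcal{I}_{t'} \cup \mathcal{I}_{t''} = \mathcal{I}_t$. For the right-hand side, recall the $\lambda_1$-coefficient is $b_1 - \sum_{s \in \mathcal{S} \setminus \mathcal{S}_p} \ubar{\alpha}^1_s$ and the $\lambda_2$-coefficient is $\sum_{s \in \mathcal{S}_p} \bar{\alpha}^1_s$. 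The only group whose identity changes between the two formulations is $t$ versus $\{t', t''\}$: when $t \notin \mathcal{S}_p$ it affects only the $\ubar{\alpha}$-sum in the $\lambda_1$-coefficient, and when $t \in \mathcal{S}_p$ it affects only the $\bar{\alpha}$-sum in the $\lambda_2$-coefficient. In both cases the additivity from the previous step makes the changed contribution identical, so the inequality of the $(P+1)$-split indexed by $\mathcal{S}_p^+$ is literally the same inequality as the $P$-split one indexed by $\mathcal{S}_p$.

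The hard part, and the place where the independence assumption is indispensable, is exactly this right-hand-side coincidence. In general one has only $\ubar{\alpha}^l_t \geq \ubar{\alpha}^l_{t'} + \ubar{\alpha}^l_{t''}$ (minimum of a sum dominates the sum of minimums) and $\bar{\alpha}^l_t \leq \bar{\alpha}^l_{t'} + \bar{\alpha}^l_{t''}$; substituting these into the $\lambda_1$- and $\lambda_2$-coefficients respectively would make the matched $(P+1)$-split inequality strictly \emph{looser}, breaking the containment. Independence promotes both inequalities to equalities and repairs this. Having matched all $P$-split inequalities, I would finish by observing that the remaining $(P+1)$-split inequalities---those $\mathcal{S}_p^+$ that place $t'$ and $t''$ on opposite sides---are genuinely new valid cuts that can only further restrict the region. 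Hence the projected feasible set of the $(P+1)$-split is contained in that of the $P$-split, establishing the corollary.
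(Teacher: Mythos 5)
Your proof is correct and takes essentially the same route as the paper: the paper's one-line proof asserts precisely that the non-extended $(P+1)$-split formulation \eqref{eq:P-2-split-nonlifted} comprises the constraints of the $P$-split formulation plus additional ones. Your write-up supplies the details that the paper leaves implicit---in particular, that independence yields the bound additivity $\ubar{\alpha}^l_t = \ubar{\alpha}^l_{t'} + \ubar{\alpha}^l_{t''}$ and $\bar{\alpha}^l_t = \bar{\alpha}^l_{t'} + \bar{\alpha}^l_{t''}$, without which the matched $(P+1)$-split inequalities would be strictly looser rather than identical (exactly the failure the paper illustrates by comparing (2-s3) with (1-s) in its example).
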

\begin{proof}
The non-extended formulation \eqref{eq:P-2-split-nonlifted} for the $(P+1)$-split comprises the constraints in the $P$-split formulation and some additional constraints. \qed
\end{proof}
From Corollary 1 it follows that the $P$-split formulations represent a hierarchy of relaxations, and we formally state this property in the following corollary.
\begin{corollary}
For a linear two-term disjunction the P-split formulations form a hierarchy of relaxations, starting from the big-M relaxation ($P=1$) and converging to the convex hull relaxation ($P=n$). 
\end{corollary}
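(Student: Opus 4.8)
The plan is to assemble the three previously established results---Theorem 1, Corollary 1, and Theorem 2---into a single monotone chain of relaxations indexed by the split parameter $P$. Write $R_P \subseteq \mathbb{R}^n$ for the projection onto the $\boldsymbol{x}$-space of the feasible set of the \emph{continuous} relaxation of the $P$-split formulation (obtained by dropping integrality of $\boldsymbol{\lambda}$). The target is the nested inclusion
\[
R_1 \supseteq R_2 \supseteq \dots \supseteq R_n,
\]
together with the identifications $R_1 = $ (big-M relaxation) and $R_n = $ (convex hull). Since a \emph{tighter} relaxation corresponds to a \emph{smaller} projected feasible set, this chain is exactly the asserted hierarchy.

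First I would anchor the two endpoints. By Theorem 1 the $1$-split and the big-M formulation share the same continuous relaxation, so $R_1$ equals the big-M relaxation. By Theorem 2, in the affine case the $n$-split formulation represents the convex hull of the disjunction, so $R_n$ equals the convex-hull relaxation; this is the step where linearity of the $h_{i,l}$ is essential, since it supplies the bijective mapping $\mathbf{\Gamma}$ used in that proof.

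Next I would establish monotonicity of the interior of the chain by iterating Corollary 1. Starting from the trivial one-group partition $\mathcal{I}_1 = \{1,\dots,n\}$ underlying the $1$-split, I would split off a single variable at each step, producing partitions with $2, 3, \dots, n$ groups and terminating at the full disaggregation (all singletons) that defines the $n$-split. Because a linear two-term disjunction has independent bounds whenever $\mathcal{X}$ is a box (as noted just before Corollary 1), Corollary 1 applies at every step and yields $R_P \supseteq R_{P+1}$ for each such refinement. Chaining these inclusions produces the desired nested sequence.

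The main obstacle, and the point I would treat most carefully, is that Corollary 1 compares only a $P$-split with a \emph{specific} $(P+1)$-split obtained by refining one group, so I must argue that successive single-variable splits genuinely connect the $1$-split to the $n$-split along a valid refinement path; this is immediate, since any partition can be refined one element at a time down to singletons, and the relation is with respect to the refinement partial order rather than across incomparable partitions. A secondary subtlety is that monotonicity alone does not deliver the endpoints---convergence to the convex hull relies on Theorem 2 (affine case), not on Corollary 1---so I would be explicit that ``converging to the convex hull'' means the chain \emph{terminates exactly} at the convex hull at $P=n$, and not merely that it is monotone and decreasing.
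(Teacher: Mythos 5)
Your proof is correct and takes essentially the same route as the paper: Theorems 1 and 2 anchor the endpoints ($P=1$ equals big-M, $P=n$ equals the convex hull in the affine case), and iterating Corollary 1 along a chain of single-group refinements gives the monotone nesting in between. The paper's own proof is a two-line version of this same argument; your added care about the validity of the refinement path and the independent-bounds requirement (box $\mathcal{X}$) only makes explicit what the paper leaves implicit.
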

\begin{proof}
Theorems 1 and 2 give equivalence to big-M and convex hull. By Corollary 1, the $(P+1)$-split is as tight or tighter than the $P$-split relaxation. \qed
\end{proof}

\subsection{Illustrative example}
To see the differences between $P$-split formulations, consider the disjunction
\begin{equation}
\label{eq:example1}
\tag{ex-1}
\begin{aligned}
    &\begin{bmatrix}
    \sum_{i=1}^4 x_i ^2  \leq 1 
    \end{bmatrix}
    \
    \lor \
    \begin{bmatrix}
    \sum_{i=1}^4 (3-x_i)^2 \leq 1
    \end{bmatrix}\\
    & \boldsymbol{x} \in \mathbb{R}^4.    
\end{aligned}
\end{equation}
The tightest valid bounds on all the auxiliary variables are given by
\begin{equation}
    \ubar{\alpha}^l_s = 0,\quad  \bar{\alpha}^l_s := \left(\sqrt{|\mathcal{I}_s|\cdot3^2} + 1 \right)^2 \quad \forall s \in \{1,2,3,4\},\ \forall l \in \{1,2\}.
\end{equation}
These bounds are derived from the fact that one of the two constraints in the disjunction must hold, and are symmetric for the two set of $\alpha$-variables. The continuously relaxed feasible sets of the $P$-split formulations of disjunction \eqref{eq:example1} are shown in Fig.~\ref{fig:relaxations}, which shows that the relaxations overall tighten with increasing number of splits $P$. The 4-split formulation does not give the convex hull, but provides a good approximation. For this example, the independent bound property does not hold and the relaxations do not form a proper hierarchy. To show why the independent bound property is needed, we compare the non-extended representations of the 1-split and 2-split formulations:
\begin{align}
    &\sum_{i=1}^4 x_i^2 \leq \lambda_1 + \left(\sqrt{36} +1\right)^2\lambda_2, &&\sum_{i=1}^4 (3-x_i)^2  \leq \lambda_2 + \left(\sqrt{36} +1\right)^2\lambda_1 \tag{1-s}\\
    &  \sum_{i=1}^2 x_i ^2 \leq \lambda_1 +\left(\sqrt{18} +1\right)^2\lambda_2, &&\sum_{i=3}^4 x_i ^2 \leq \lambda_1 +\left(\sqrt{18} +1\right)^2\lambda_2 \tag{2-s1}\\
    & \sum_{i=1}^2 (3-x_i)^2  \leq \lambda_2 +\left(\sqrt{18} +1\right)^2\lambda_1,   \hspace{-0.4cm}&&\sum_{i=3}^4 (3-x_i)^2  \leq \lambda_2 + \left(\sqrt{18} +1\right)^2\lambda_1 \tag{2-s2}\\
    &  \sum_{i=1}^4 x_i ^2  \leq \lambda_1 + 2\left(\sqrt{18} +1\right)^2\lambda_2,  &&\hspace{-0.4cm}\sum_{i=1}^4 (3-x_i)^2 \leq \lambda_2 +2\left(\sqrt{18} +1\right)^2\lambda_1. \tag{2-s3}
\end{align}
The 1-split formulation is given by (1-s), and the 2-split by (2-s1)--(2-s3). 
The 2-split contains additional constraints ~(2-s1) and (2-s2), but ~(2-s3) is a weaker version of (1-s). If the independent bound property were true, then (2-s3) and (1-s) would be identical and the relaxations would form a proper hierarchy. 
\vspace{-10pt}
\begin{figure}[!h]
    \centering
    \begin{subfigure}[t]{.32\linewidth}
    \includegraphics[trim={1.5cm 0cm 2cm 0cm},clip,width=.99\linewidth]{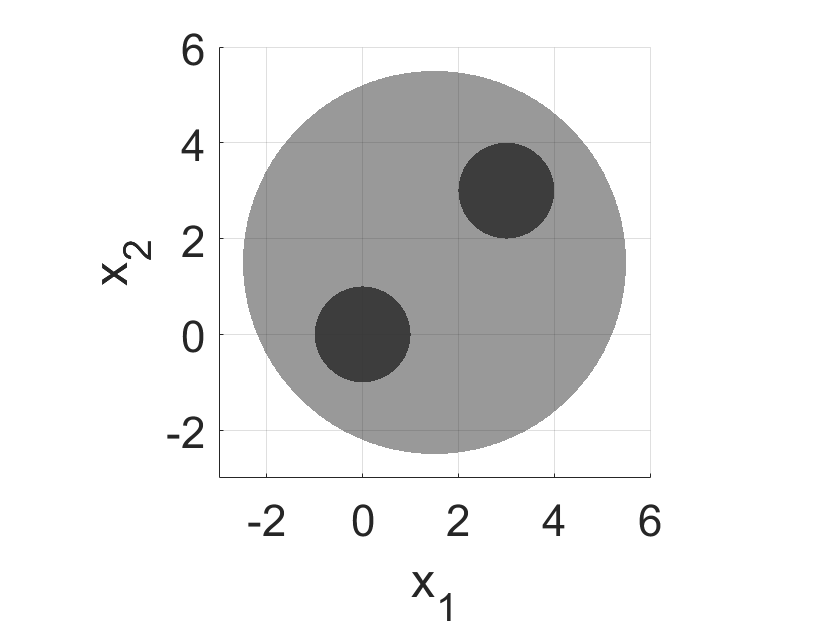}
    \caption{\centering 1-split/big-M $\left(\{x_1, x_2, x_3, x_4\}\right)$}
    \end{subfigure}
    \begin{subfigure}[t]{.32\linewidth}
    \includegraphics[trim={1.5cm 0cm 2cm 0cm},clip,width=.99\linewidth]{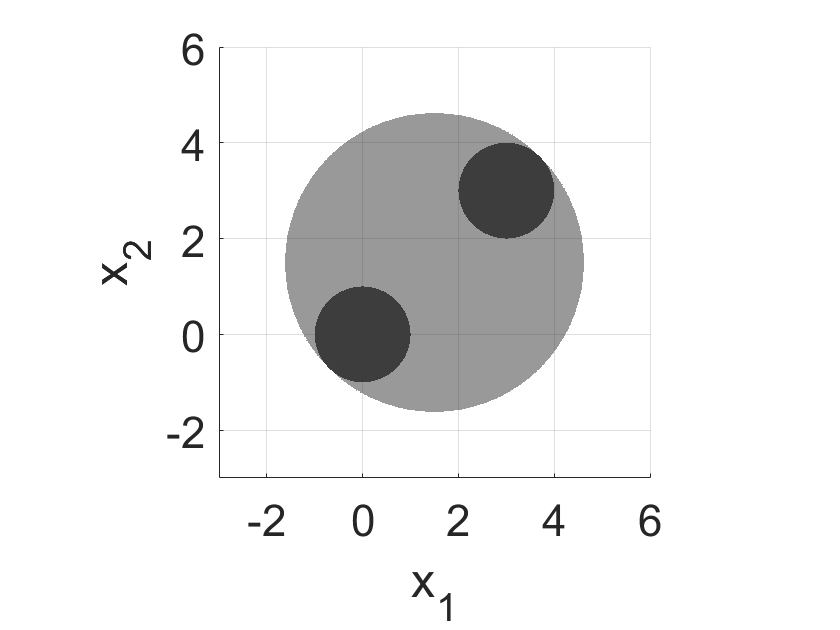}
    \caption{\centering 2-split \break $\left(\{x_1, x_2\}, \{x_3, x_4\}\right)$}
    \end{subfigure}
    \begin{subfigure}[t]{.32\linewidth}
    \includegraphics[trim={1.5cm 0cm 2cm 0cm},clip,width=.99\linewidth]{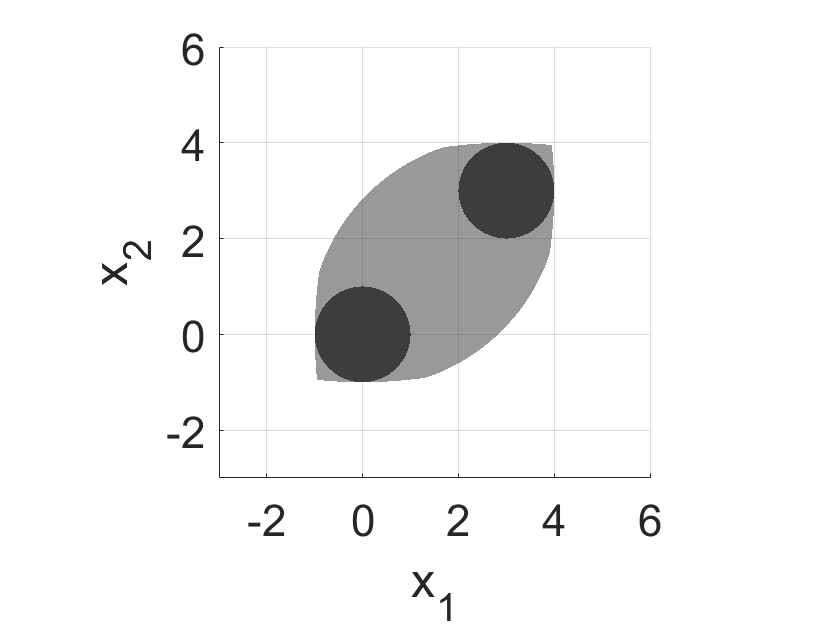}
    \caption{\centering 4-split $\left(\{x_1\},\{x_2\},\{x_3\},\{x_4\}\right)$}
    \end{subfigure}
      \caption{The dark circles show the feasible set of \eqref{eq:example1} in the $x_1,x_2$ space. The light grey areas show the continuously relaxed feasible set of the P-split formulations. The sets in the parentheses show the partitioning of variables.}
    \label{fig:relaxations}
\end{figure}

\section{Numerical comparison}
To compare how the formulations perform computationally, we apply the $P$-split, big-M, and convex hull formulations to several test problems. We consider three types of optimization problems that have a suitable structure for the $P$-split formulation (assumptions 1--3) and that are known to be challenging. 
\subsubsection{K-means clustering} 
Using the formulation by Papageorgiou and Trespalacios \cite{papageorgiou2018pseudo}, the K-means clustering problem \cite{macqueen1967some} is given by
\begin{equation}
\label{eq:clustering}
    \begin{aligned}
        & \min_{\mathbf{r} \in \mathbb{R}^L, \boldsymbol{x}^j \in \mathbb{R}^n, \forall j \in \mathcal{K}} &&\sum_{i=1}^L r_i\\
        & \text{s.t.} &&\underset{j \in \mathcal{K}}{\lor} \begin{bmatrix}
        \norm{\boldsymbol{x}^j - \mathbf{d}^i}_2^2 \leq r_i  
        \end{bmatrix}\quad  \forall i \in \{1, 2, \dots, L\},\\
    \end{aligned}
\end{equation}
where $\boldsymbol{x}^j$ are the cluster centers, $\{\mathbf{d}^i\}_{i=1}^L$ are $n$-dimensional data points, and $\mathcal{K} =\{1,2,\dots k\}$. The tightest upper bound for the auxiliary variables in the P-split formulations are given by the largest squared Euclidean distance between any two data points in the subspace corresponding to the auxiliary variable. By introducing auxillary variables for the differences $(\boldsymbol{x}-\mathbf{d})$, we can express the convex hull of the disjunctions by rotated second order cone constraints \cite{ben2001lectures} in a form suitable for Gurobi. We use the G2 data set \cite{G2sets} to generate low-dimensional test instances, and the MNIST data set \cite{lecun2010mnist} to generate high-dimensional test instances. For the MNIST-based problems, we select the first images of each class ranging from 0 to the number of clusters. Details about the problems are presented in Table~\ref{tab:test_probs}. 

\subsubsection{P\_ball problems} The task is to assign $p$-points to $n$-dimensional unit balls such that the total $\ell_1$ distance between all points is minimized and only one point is assigned to each unit ball \cite{kronqvist2020disjunctive}. Upper bounds on the auxiliary variables in the P-split formulation are given by the same technique as for the $M$-coefficients in \cite{kronqvist2020disjunctive}, but in the subspace corresponding to the auxiliary variable. By introducing auxiliary variables for the differences between the points and the centers, we are able to express the convex hull by second order cone constraints \cite{ben2001lectures} in a form suitable for Gurobi. We have generated a few larger instances to obtain more challenging problems and details of the problems are given in Table~\ref{tab:test_probs}. 

\subsubsection{ReLU neural networks}
Optimization over a ReLU neural network (NN) is used to quantify extreme outputs \cite{anderson2020strong,botoeva2020efficient}. Each ReLU activation function ($y = \textrm{max} \{ 0, \boldsymbol{w}^T \boldsymbol{x} + b\}$) can be expressed as a two-part disjunction using the $P$-split formulation, by separating $\boldsymbol{w}^T \boldsymbol{x} = \sum_{i \in \mathcal{S}_1 \cup ... \cup \mathcal{S}_P} w_i x_i$. 
We sort the variables $x_i$ by index and assign them to splits of even size. 
Upper bounds on node outputs and auxiliary variables can be computed using simple interval arithmetic.
We created several instances (Table~\ref{tab:test_probs}) that minimize the prediction of single-output NNs trained on the $d$-dimensional Ackley/Rastrigin functions. 
All NNs were implemented in PyTorch \cite{pytorch} and trained for 1000 epochs, using a Latin hypercube of 10$^6$ samples. 
Note that more samples may be required to accurately represent the target functions, but here we are solely concerned with the performance of various optimization formulations.

\begin{table}[]
\caption{Details of the clustering, P$\_$ball and neural network problems.}
\centering
\begin{tabular}{ c| c | c |c } \hline
 name &\ data points \ & \ data dimension \ & \ number of clusters\\
 \hline
 Cluster\_g1 & 20 & 32 & 2\\ 
 Cluster\_g2 & 25 & 32 & 2\\ 
 Cluster\_g3 & 20 & 16 & 3\\ \hline
 Cluster\_m1 & 5 & 784 & 3\\ 
 Cluster\_m2 & 8 & 784 & 2\\ 
 Cluster\_m3 & 10 & 784 & 2\\
 \hline
  &\ number of balls \ & \ number of points \ & \ ball dimension\\
 \hline
 P\_ball\_1\ \ & 10 & 5 & 8\\ 
 P\_ball\_2\ \ & 10 & 5 & 16\\ 
 P\_ball\_3\ \ & 8 & 5 & 32\\
\hline
  &\ input dimension ($d$) \ & \ hidden layers \ & \ function \\
  \hline
 NN\_1 & 2 & [50, 50, 50] & Ackley \\
 NN\_2 & 10 & [50, 50, 50] & Ackley \\
 NN\_3 & 3 & [100, 100] & Rastrigin \\ \hline
\end{tabular}
\label{tab:test_probs}
\vspace{-8pt}
\end{table}

\subsubsection{Computational setup}
Optimization performance is dependent on both the tightness and the computational complexity of the continuous relaxation. 
The default (automatic) parameter selection in Gurobi caused large variations in the results that were due to different solution strategies rather than differences between formulations. 
Therefore, we used the parameter settings \texttt{MIPFocus} = 3, \texttt{Cuts} = 1, and \texttt{MIQCPMethod} = 1 for all problems. 
We found that using \texttt{PreMIQCPForm} = 2 drastically improves the performance of the extended convex hull formulations for the clustering and P\_ball problems. However, it resulted in worse performance for the other formulations and, therefore, we only used it with the convex hull. Since the NN problems only contain linear constraints, only the \texttt{MIPFocus} and \texttt{Cuts} parameters apply to these problems 
The default values were used for all other settings. 
All problems were solved using Gurobi 9.0.3 on a desktop computer with an i7 8700k processor and 16GB RAM. 

Different variable partitionings can lead to differences in the $P$-split formulations. For all the problems, the variables are simply partitioned based on their ordered indices. For the K-means clustering and P\_ball problems, we have used the smallest valid M-coefficients and thight bounds for the $\alpha$-variables. The K-means clustering and P\_ball problems both have analytical expressions for all the bounds. For the NN problems tight bounds are not easily obtained, and the bounds are obtained using interval arithmetic.
\subsection{Numerical results}
Table~\ref{tab:res} shows the elapsed CPU time and number of nodes explored to solve each problem. The results show that $P$-split formulations can drastically reduce the number of explored nodes compared to the big-M formulation, even with only a few splits. The differences are clearest for the nonlinear problems, where both the CPU times and numbers of nodes are reduced by several orders of magnitude. As expected, the convex hull formulation results in the fewest explored nodes. 
However, the $P$-split formulations have a simpler\footnotemark\  problem formulation, reducing the CPU times for all but one instance compared to the convex hull. The results clearly show the advantage of the intermediate $P$-split formulations, resulting in a tighter formulation than big-M and a computationally cheaper formulation than the extended convex hull. 
\begin{table}[!h]
\caption{CPU times [s] and numbers of nodes explored for test problems. In bold is the \emph{winner} for each test instance with respect to both time and number of nodes. The grey shading shows the $P$-split times that strictly outperform both the big-M and convex hull formulations. The time limit was 1800 CPU seconds. 
Cells marked NA correspond to instances with fewer than $P$ terms per disjunction.}
\vspace{2pt}
\centering
\begin{tabular}{ c c | c | c |  c | c | c | c | c } 
\hline
 instance & &\ big-M \ & \ 2-split \ & \ 4-split & \ 8-split & \ 16-split & \ 32-split & \ convex hull\\
 \hline
 Cluster\_g1 \ &time & $>$1800 & 81.0  & \cellcolor[gray]{0.8}13.9 & \cellcolor[gray]{0.8}2.9 & \cellcolor[gray]{0.8}\textbf{1.7} & \cellcolor[gray]{0.8}3.5 & 42.0\\
   & nodes & $>$8998 & 2946 & 1096 & 256 & 98 & 91 & \textbf{73}\\
    \hline
 Cluster\_g2 \ &time & $>$1800  & 106.3  & \cellcolor[gray]{0.8}7.7  & \cellcolor[gray]{0.8}4.3  & \cellcolor[gray]{0.8}\textbf{2.1 } & \cellcolor[gray]{0.8}4.5  & 40.6 \\
   & nodes & $>$10431 & 1736 & 481 & 217 & 104 & 86 & \textbf{77}\\
    \hline
 Cluster\_g3 \ &time & $>$1800 & $>$1800 & \cellcolor[gray]{0.8}870.6  &\  \cellcolor[gray]{0.8}\textbf{407.2}  & \cellcolor[gray]{0.8}597.5  & NA  & $>$1800\\
   & nodes & $>$28906 & $>$40820 & 19307 & \textbf{14923} & 16806 &   & $>$7797\\
    \hline
 P\_ball\_1 \ &time & 403.0 & 235.4 & 285.1 & \cellcolor[gray]{0.8}\textbf{18.5} &NA  &NA  & 42.2 \\
   & nodes & 29493 & 7919 & 5518 & 2202 &  &  & \textbf{1437}\\
    \hline
 P\_ball\_2 \ &time & $>$1800 & 483.6  & 326.6  & 41.6  & 30.6  &NA  & \textbf{28.2} \\
   & nodes & $>$19622 & 13602 & 5871 & 3921 & 1261 &  & \textbf{531}\\
    \hline
 P\_ball\_3 &time & $>$1800 & $>$1800& $>$1800 & 149.3 & \cellcolor[gray]{0.8}91.1 & \cellcolor[gray]{0.8}\textbf{78.7} & 114.0 \\
   & nodes & $>$7537 & $>$6035 & $>$6708 & 7042 & 3572 & 631 & \textbf{554} \\
    \hline
 & &\ big-M \ & \ 14-split \ & \ 28-split & \ 56-split & \ 196-split & \ 392-split & \ convex hull\\
 \hline
 Cluster\_m1 \ &time & $>$1800 & $>$1800  & \cellcolor[gray]{0.8}129.5 & \cellcolor[gray]{0.8}76.8 & \cellcolor[gray]{0.8}\textbf{32.0} & \cellcolor[gray]{0.8}33.2 & 313.3\\
   & nodes & $>$10680 & $>$9651 & 2926 & 1462 & 524 & \textbf{195} & 228\\
    \hline
 Cluster\_m2 \ &time & $>$1800  &\cellcolor[gray]{0.8} 1116.5  & \cellcolor[gray]{0.8} 156.1  & \cellcolor[gray]{0.8}\textbf{27.1}  & \cellcolor[gray]{0.8} 97.0 & \cellcolor[gray]{0.8}54.2  & 1260.1 \\
   & nodes & $>$4867 & 6220 & 1915 & 805 & 2752 & 1155 & \textbf{131}\\
    \hline
 Cluster\_m3 \ &time & $>$1800 & $>$1800  & \cellcolor[gray]{0.8}429.5  &\  \cellcolor[gray]{0.8}60.0  & \cellcolor[gray]{0.8}23.2 & \cellcolor[gray]{0.8}\textbf{19.8}  & $>$1800\\
   & nodes & $>$4419 & $>$4197 & 3095 & 1502 & 741 & \textbf{397}  & $>$93\\
    \hline    
 \rule{0pt}{2.5ex}& & \ 1-split/\ & \ 2-split & \ 4-split & \ 8-split & \ 16-split & \ 32-split & 50-split/ \\
  & & \ big-M \ &  & &  & &  &convex hull* \\
     \hline
NN\_1 \ &time & 36.1 & \cellcolor[gray]{0.8} \textbf{29.4} & 41.8 & 57.0 & 85.7 & 145.1 & 198.5 \\
   & nodes & 24177 & 12377 & 11229 & \textbf{7415} & 11117 & 9793 & 11734 \\
     \hline
NN\_2 &time & \textbf{21.6} & 35.5 & 50.7 & 131.4 & 287.3 & 776.1 & $>$1800 \\
   & nodes & 19746 & 20157 & 14003 & 11174 & \textbf{6687} & 12685 & $>$4016 \\
    \hline
NN\_3 \ &time & \textbf{141.8} & 210.6 & 206.5 & 275.5 & 305.8 & 429.1 & 556.6\\ 
   & nodes & 116996 & 101113 & 86582 & 84455 & 69022 & 56873 & \textbf{48153}\\ 
    \hline
\end{tabular}
\footnotesize
*50-split is not the convex hull of each node for NN\_3, which has layers of 100 nodes.
\label{tab:res}
\end{table}

Note that the $P$-split formulations are in general robust towards the choice of $P$. For the clustering and P\_ball problems, all $P$-split formulations outperformed the big-M formulation both in terms of solution times and numbers of explored nodes. For the cases where the smallest $P$-split formulations timed out, Gurobi terminated with a much smaller gap compared to that of the big-M formulation. The $P$-split formulations also outperform the convex hull formulations in terms of solution time for a wide range of $P$ in all but one of the test problems.  
\footnotetext{The extended convex hull formulations for the nonlinear problems require auxiliary variables and (rotated) second order cone constraints. All $P$-split formulations  have fewer variables and constraints and only contain linear/convex-quadratic constraints.} 

For the NN problems, which have linear disjunctions, the situation is somewhat different. 
Here, while increasing $P$ still decreased the number of explored nodes, the improvements are less significant, and the trend is not completely monotonic. 
Note that bounds on the inputs to layers 2--3 are computed using interval arithmetic, resulting in overall weaker relaxations for all formulations. The weaker bounds in layers 2--3 reduce the benefits of both the $P$-split and convex hull formulations, and may favor the simpler big-M formulation.    
As the reduction in explored nodes is less drastic, smaller formulations perform the best in terms of CPU time, supporting claims that extended formulations may perform worse than expected \cite{anderson2020strong,vielma2019small}.
This may also be a consequence of Gurobi efficiently handling linear problems when it detects big-M-type constraints. 
Ignoring the big-M (1-split), the 2- and 4-splits have the lowest CPU time for all NNs, and all the split formulations solve the problems significantly faster than the convex hull formulation.  

\newpage
\section{Conclusions}
We have presented a general framework for generating intermediate relaxations in between the big-M and convex hull. The numerical results show a great potential of the intermediate relaxations, by providing a good approximation of the convex hull through a computationally simpler problem. For several of the test problems, the intermediate relaxations result in a similar number of explored nodes as the convex hull formulation while reducing the total solution time by an order of magnitude. 
\section*{Acknowledgements}
The research was funded by a Newton International Fellowship by the Royal Society (NIF\textbackslash R1\textbackslash 182194) to JK, a grant by the Swedish Cultural Foundation in Finland to JK, and by Engineering \& Physical Sciences Research Council (EPSRC) Fellowships to RM and CT (grant numbers EP/P016871/1 and EP/T001577/1). CT also acknowledges support from an Imperial College Research Fellowship.

\bibliographystyle{splncs04}
\bibliography{Ref}

\begin{thebibliography}{10}
\providecommand{\url}[1]{\texttt{#1}}
\providecommand{\urlprefix}{URL }
\providecommand{\doi}[1]{https://doi.org/#1}

\bibitem{anderson2020strong}
Anderson, R., Huchette, J., Ma, W., Tjandraatmadja, C., Vielma, J.P.: Strong
  mixed-integer programming formulations for trained neural networks.
  Mathematical Programming pp. 1--37 (2020)

\bibitem{balas1985disjunctive}
Balas, E.: Disjunctive programming and a hierarchy of relaxations for discrete
  optimization problems. SIAM Journal on Algebraic Discrete Methods
  \textbf{6}(3),  466--486 (1985)

\bibitem{balas1988convex}
Balas, E.: On the convex hull of the union of certain polyhedra. Operations
  Research Letters  \textbf{7}(6),  279--283 (1988)

\bibitem{balas1998disjunctive}
Balas, E.: Disjunctive programming: Properties of the convex hull of feasible
  points. Discrete Applied Mathematics  \textbf{89}(1-3),  3--44 (1998)

\bibitem{Balas2018}
Balas, E.: Disjunctive Programming. Springer International Publishing (2018).
  \doi{10.1007/978-3-030-00148-3}

\bibitem{ben2001lectures}
Ben-Tal, A., Nemirovski, A.: Lectures on Modern Convex Optimization: Analysis,
  Algorithms, and Engineering Applications, vol.~2. Siam (2001)

\bibitem{bonami2015mathematical}
Bonami, P., Lodi, A., Tramontani, A., Wiese, S.: On mathematical programming
  with indicator constraints. Mathematical Programming  \textbf{151}(1),
  191--223 (2015)

\bibitem{botoeva2020efficient}
Botoeva, E., Kouvaros, P., Kronqvist, J., Lomuscio, A., Misener, R.: Efficient
  verification of {ReLU}-based neural networks via dependency analysis. In:
  AAAI-20 Proceedings. pp. 3291--3299 (2020)

\bibitem{ceria1999convex}
Ceria, S., Soares, J.: Convex programming for disjunctive convex optimization.
  Mathematical Programming  \textbf{86}(3),  595--614 (1999)

\bibitem{conforti2014integer}
Conforti, M., Cornu{\'e}jols, G., Zambelli, G.: Integer programming, volume 271
  of graduate texts in mathematics (2014)

\bibitem{conforti2008compact}
Conforti, M., Wolsey, L.A.: Compact formulations as a union of polyhedra.
  Mathematical Programming  \textbf{114}(2),  277--289 (2008)

\bibitem{fischetti2018deep}
Fischetti, M., Jo, J.: Deep neural networks and mixed integer linear
  optimization. Constraints  \textbf{23}(3),  296--309 (2018)

\bibitem{grimstad2019relu}
Grimstad, B., Andersson, H.: {ReLU} networks as surrogate models in
  mixed-integer linear programs. Computers \& Chemical Engineering
  \textbf{131},  106580 (2019)

\bibitem{grossmann2003generalized}
Grossmann, I.E., Lee, S.: Generalized convex disjunctive programming: Nonlinear
  convex hull relaxation. Computational optimization and applications
  \textbf{26}(1),  83--100 (2003)

\bibitem{gunluk2010perspective}
G{\"u}nl{\"u}k, O., Linderoth, J.: Perspective reformulations of mixed integer
  nonlinear programs with indicator variables. Mathematical programming
  \textbf{124}(1-2),  183--205 (2010)

\bibitem{helton2009sufficient}
Helton, J.W., Nie, J.: Sufficient and necessary conditions for semidefinite
  representability of convex hulls and sets. SIAM Journal on Optimization
  \textbf{20}(2),  759--791 (2009)

\bibitem{hijazi2012mixed}
Hijazi, H., Bonami, P., Cornu{\'e}jols, G., Ouorou, A.: Mixed-integer nonlinear
  programs featuring “on/off” constraints. Computational Optimization and
  Applications  \textbf{52}(2),  537--558 (2012)

\bibitem{huang2005coverage}
Huang, C.F., Tseng, Y.C.: The coverage problem in a wireless sensor network.
  Mobile networks and Applications  \textbf{10}(4),  519--528 (2005)

\bibitem{jeroslow1988simplification}
Jeroslow, R.G.: A simplification for some disjunctive formulations. {European
  Journal of Operational research}  \textbf{36}(1),  116--121 (1988)

\bibitem{jeroslow1984modelling}
Jeroslow, R.G., Lowe, J.K.: Modelling with integer variables. In: Mathematical
  Programming at Oberwolfach II, pp. 167--184. Springer (1984)

\bibitem{kronqvist2020disjunctive}
Kronqvist, J., Misener, R.: A disjunctive cut strengthening technique for
  convex {MINLP}. Optimization and Engineering pp. 1--31 (2020)

\bibitem{lasserre2001explicit}
Lasserre, J.B.: An explicit exact {SDP} relaxation for nonlinear 0-1 programs.
  In: International Conference on Integer Programming and Combinatorial
  Optimization. pp. 293--303. Springer (2001)

\bibitem{lecun2010mnist}
LeCun, Y., Cortes, C., Burges, C.: Mnist handwritten digit database. ATT Labs
  [Online]. Available: http://yann.lecun.com/exdb/mnist  \textbf{2} (2010)

\bibitem{liittschwager1978integer}
Liittschwager, J., Wang, C.: Integer programming solution of a classification
  problem. Management Science  \textbf{24}(14),  1515--1525 (1978)

\bibitem{lovasz1991cones}
Lov{\'a}sz, L., Schrijver, A.: Cones of matrices and set-functions and 0--1
  optimization. SIAM Journal on Optimization  \textbf{1}(2),  166--190 (1991)

\bibitem{macqueen1967some}
MacQueen, J., et~al.: Some methods for classification and analysis of
  multivariate observations. In: Proceedings of the fifth Berkeley symposium on
  mathematical statistics and probability. vol.~1, pp. 281--297. Oakland, CA,
  USA (1967)

\bibitem{G2sets}
Mariescu-Istodor, P.F.R., Zhong, C.: {XNN} graph  \textbf{LNCS 10029},
  207--217 (2016)

\bibitem{papageorgiou2018pseudo}
Papageorgiou, D.J., Trespalacios, F.: Pseudo basic steps: bound improvement
  guarantees from {Lagrangian} decomposition in convex disjunctive programming.
  EURO Journal on Computational Optimization  \textbf{6}(1),  55--83 (2018)

\bibitem{pytorch}
Paszke, A., Gross, S., Massa, F., Lerer, A., Bradbury, J., Chanan, G., Killeen,
  T., Lin, Z., Gimelshein, N., Antiga, L., et~al.: {Pytorch: An} imperative
  style, high-performance deep learning library. In: Advances in Neural
  Information Processing Systems. pp. 8026--8037 (2019)

\bibitem{rubin1997solving}
Rubin, P.A.: Solving mixed integer classification problems by decomposition.
  Annals of Operations Research  \textbf{74},  51--64 (1997)

\bibitem{ruiz2012hierarchy}
Ruiz, J.P., Grossmann, I.E.: A hierarchy of relaxations for nonlinear convex
  generalized disjunctive programming. European Journal of Operational Research
   \textbf{218}(1),  38--47 (2012)

\bibitem{sauglam2006mixed}
Sa{\u{g}}lam, B., Salman, F.S., Say{\i}n, S., T{\"u}rkay, M.: A mixed-integer
  programming approach to the clustering problem with an application in
  customer segmentation. {European Journal of Operational Research}
  \textbf{173}(3),  866--879 (2006)

\bibitem{sawaya2007computational}
Sawaya, N.W., Grossmann, I.E.: Computational implementation of non-linear
  convex hull reformulation. Computers \& Chemical Engineering  \textbf{31}(7),
   856--866 (2007)

\bibitem{serra2020lossless}
Serra, T., Kumar, A., Ramalingam, S.: Lossless compression of deep neural
  networks. In: Integration of Constraint Programming, Artificial Intelligence,
  and Operations Research. pp. 417--430. Springer (2020)

\bibitem{sherali1990hierarchy}
Sherali, H.D., Adams, W.P.: A hierarchy of relaxations between the continuous
  and convex hull representations for zero-one programming problems. SIAM
  Journal on Discrete Mathematics  \textbf{3}(3),  411--430 (1990)

\bibitem{stubbs1999branch}
Stubbs, R.A., Mehrotra, S.: A branch-and-cut method for 0-1 mixed convex
  programming. Mathematical Programming  \textbf{86}(3),  515--532 (1999)

\bibitem{trespalacios2015algorithmic}
Trespalacios, F., Grossmann, I.E.: Algorithmic approach for improved
  mixed-integer reformulations of convex generalized disjunctive programs.
  INFORMS Journal on Computing  \textbf{27}(1),  59--74 (2015)

\bibitem{vielma2015mixed}
Vielma, J.P.: Mixed integer linear programming formulation techniques. Siam
  Review  \textbf{57}(1),  3--57 (2015)

\bibitem{vielma2019small}
Vielma, J.P.: Small and strong formulations for unions of convex sets from the
  cayley embedding. Mathematical Programming  \textbf{177}(1-2),  21--53 (2019)

\bibitem{vielma2010mixed}
Vielma, J.P., Ahmed, S., Nemhauser, G.: Mixed-integer models for nonseparable
  piecewise-linear optimization: unifying framework and extensions. Operations
  research  \textbf{58}(2),  303--315 (2010)

\bibitem{vielma2011modeling}
Vielma, J.P., Nemhauser, G.L.: Modeling disjunctive constraints with a
  logarithmic number of binary variables and constraints. Mathematical
  Programming  \textbf{128}(1-2),  49--72 (2011)

\end{thebibliography}

\end{document}